\newtheorem{Problem}{Problem}
\newtheorem{Theorem}{Theorem}
\newtheorem{Lemma}[Theorem]{Lemma}
\newcommand{\Tr}{\mathrm{Trd}}
\newcommand{\Nr}{\mathrm{Nrd}}
\newcommand{\bigO}{O}
\newcommand{\QA}{B_{p,\infty}}
\newcommand{\cO}{\mathcal{O}}
\newcommand{\PGL}{\mathrm{PGL}}
\renewcommand{\AA}{\mathbb{A}}
\newcommand{\FF}{\mathbb{F}}
\newcommand{\PP}{\mathbb{P}}
\newcommand{\QQ}{\mathbb{Q}}
\newcommand{\ZZ}{\mathbb{Z}}
\newcommand{\Mat}{\mathbb{M}}
\newcommand{\End}{\mathrm{End}}
\newcommand{\disc}{\mathrm{disc}}
\newcommand{\pp}{\mathfrak{p}}
\newcommand{\dsp}{\displaystyle}
\newcommand{\kro}[2]{\left(\!\frac{#1}{#2}\!\right)}
\newcommand{\ccomma}{\raisebox{0.4ex}{,}}
\begin{document}
\title{On the quaternion $\ell$-isogeny path problem}
\author{David Kohel, Kristin Lauter, Christophe Petit\thanks{ The third author is supported by an F.R.S.-FNRS postdoctoral research fellowship at Universit\'e catholique de Louvain, Louvain-la-Neuve.}, Jean-Pierre Tignol}

\institute{Institut de Math\'ematiques de Marseille,Microsoft Research,\\ Universit\'e catholique de Louvain,Universit\'e catholique de Louvain}


\maketitle

\begin{center}
\emph{To appear in the LMS Journal of Computation and Mathematics, as a special issue for ANTS (Algorithmic Number Theory Symposium) conference.}
\end{center}

\begin{abstract}
Let $\cO$ be a maximal order in a definite quaternion algebra over $\mathbb{Q}$
of prime discriminant $p$, and $\ell$ a small prime.
We describe a probabilistic algorithm, which for a 
given left $\cO$-ideal, computes a representative in 
its left ideal class of $\ell$-power norm.
In practice the algorithm is efficient, and subject to heuristics on expected distributions of primes, 
runs in expected polynomial time. 
This breaks the underlying problem for a quaternion 
analog of the Charles-Goren-Lauter hash function, and 
has security implications for the original 
CGL construction in terms of supersingular elliptic 
curves.
\end{abstract}

\section{Introduction}
\label{sec:introduction}

In this paper, we provide a probabilistic algorithm to solve a
quaternion ideal analog of the path problem in supersingular 
$\ell$-isogeny graphs. 
The main result is an algorithm for the following.  Let $\QA$ 
be a quaternion algebra over $\QQ$ ramified at $p$ and $\infty$. Let $\ell$ be a ``small'' prime, typically 2 or 3, or any small constant prime.
Given a maximal quaternion order $\cO$ in $\QA$ and a left 
$\cO$-ideal $I$, compute an equivalent left $\cO$-ideal 
$J = I\beta$ with norm $\ell^k$ for some~$k$.
This algorithm runs in practice in probabilistic polynomial time,
and this effective runtime follows from heuristic assumptions on expected
distributions of primes.
With minimal adaptation, the algorithm also applies to output an 
ideal with smooth (or power-smooth) norm. 
The algorithm is described in terms of a special maximal order, 
but extends to any maximal order by passing through such a 
special order. 

The motivation for this problem is an explicit equivalence of 
categories between left $\cO$-ideals and supersingular elliptic 
curves (over $\bar{\FF}_p$). The Deuring correspondence gives 
a bijection between such curves, up to Galois conjugacy, and 
isomorphism classes of maximal orders in $\QA$.  This bijection 
can be turned into an equivalence of categories by the following 
construction.  Let $E_0/K$ be a fixed elliptic curve with 
endomorphism ring $\cO = \End(E_0)$ a quaternion order in 
$\QA = \cO \otimes \QQ$ (we may take the base field $K = \FF_{p^2}$ 
and $E_0$ such that $|E_0(K)| = (p+1)^2$).  Associated to any pair 
$(E_1,\varphi)$ where $\varphi: E_0 \rightarrow E_1$ is an isogeny, 
we obtain a left $\cO$-ideal $I = \mathrm{Hom}(E_1,E_0) \varphi$ 
of norm $n = \deg(\varphi)$ 
and conversely every left $\cO$-ideal arises in this way 
(see Kohel~\cite[Section 5.3]{Kohel1996}). 
In particular, given any isogeny $\psi: E_0 \rightarrow E_1$ of 
degree $m$, the left $\cO$-ideal $J = I \hat{\varphi}\psi/n$ is 
an equivalent ideal of norm $m$, where $\hat\psi$ is the dual of $\psi$.

The problem we address in this work is to solve the quaternion 
version of the supersingular $\ell$-isogeny path problem: given 
$E_0$, $E_1$ and a small prime $\ell$, find an $\ell$-power 
isogeny from $E_0$ to $E_1$.  Under this equivalence of categories, 
the analogous problem is the determination of a $\ell$-power norm
left $\cO$-ideal in the class of a given left $\cO$-ideal $I$. 
After introducing the necessary background on quaternion orders and ideals 
in Section~\ref{sec:quaternions} and addressing some preliminary algorithmic problems in Sections~\ref{sec:prel}, we solve the $\ell$-power norm problem in Section~\ref{sec:ideals:ellpow}.  Subject to reasonable heuristics on 
the probability of finding suitable primes, we obtain a probabilistic 
algorithm which solves this problem in expected polynomial time. 
The experimental runtime agrees with the most optimistic 
predictions for the distribution of primes. 

The algorithm gives a clear distinction between the efficiency 
of the $\ell$-isogeny problem in the equivalent category of 
quaternion ideals, whereas the analogous problem in the category 
of supersingular elliptic curves, on which the security of the 
Charles, Goren and Lauter hash function~\cite{Charles2009} is 
based, has to date resisted attack.  This dichotomy poses 
several questions on the extent to which the information from 
the algebraic category can be transported to the geometric one.  
In particular, one expects an algorithm for computing the 
endomorphism ring of a given elliptic curve to provide an 
effective reduction to the algebraic setting, making the 
hardness of this problem critical to the underlying security.

\section{The quaternion $\ell$-isogeny path problem}
\label{sec:quaternions}

In this section, we first motivate and define the quaternion $\ell$-isogeny path problem.
We then recall basic facts on quaternion algebras. We introduce \emph{$p$-extremal} maximal orders, which will play an important role in our solution of the quaternion $\ell$-isogeny problem. 
We finally discuss properties of reduced norms and ideal morphisms. 

\subsection{``Hard'' isogeny problems}
\label{sec:intro:hard}

The motivation for studying the quaternion $\ell$-isogeny problem 
is based on the analogous (indeed categorically equivalent) problem 
for supersingular elliptic curves.  
The difficulty of this problem for elliptic curves underlies 
the security of the Charles, Goren and Lauter hash 
function~\cite{Charles2009}.

As an example, finding a preimage (inverting the function) amounts 
to solving the following path problem in the supersingular 
$\ell$-isogeny graph:
\begin{Problem}\label{prob:preim}
Let $p$ and $\ell$ be prime numbers, $p\neq\ell$. Let $E_0$ and $E_1$ 
be two supersingular elliptic curves over $\FF_{p^2}$ with 
$|E_0(\FF_{p^2})|=|E_1(\FF_{p^2})|=(p+1)^2$. 
Find $k\in\mathbb{N}$ and an isogeny of degree $\ell^k$ from $E_0$ to $E_1$.
\end{Problem}
Similarly, finding collisions requires a solution to the following 
multiple path problem in the supersingular $\ell$-isogeny graph:
\begin{Problem}\label{prob:coll}
Let $p$ and $\ell$ be prime numbers, $p\neq\ell$. Let $E_0$ be 
a supersingular elliptic curve over $\FF_{p^2}$. 
Find $k_1,k_2\in\mathbb{N}$, a supersingular elliptic curve 
$E_1$ and two distinct isogenies (i.e.~with distinct kernels) 
of degrees respectively $\ell^{k_1}$ and $\ell^{k_2}$ from $E_0$ 
to $E_1$.
\end{Problem}

Setting $\cO = \End(E_0)$, we have a category of left $\cO$-ideals,
with morphisms $I \rightarrow I\alpha \subseteq J$, for $\alpha$ in 
$B = \cO \otimes \QQ$, which is equivalent to the category of 
supersingular elliptic curves and isogenies. 
The analog of the path problem in supersingular $\ell$-isogeny graphs
is that of finding a representative ideal $J$ for given $I$ of norm 
$\ell^k$. We call this problem the \emph{quaternion $\ell$-isogeny path 
problem}, and focus on 
its effective solution in this article.

\subsection{Quaternion algebras\label{sec:intro:QA}}

In this work we consider the structure of left ideals of a maximal order 
in the quaternion algebra $B_{p,\infty}$ ramified only at $p$ and $\infty$.  
Such an algebra is isomorphic to $\End(E) \otimes \QQ$ for any supersingular 
elliptic curve $E/\FF_{p^2}$. Here we denote $\End(E) = \End_{\bar{\FF}_p}(E)$ 
and if we assume $\#E(\FF_{p^2}) = (p+1)^2$, then the full endomorphism 
ring $\End(E)$ is defined over $\FF_{p^2}$.  
Any definite quaternion algebra over $\QQ$ has a presentation of the form 
$\QQ\langle{i,j}\rangle$, where $i^2 = a$, $j^2 = b$, $k = ij = -ji$ for 
negative integers $a,b$.  The canonical involution on $B_{p,\infty}$ is 
given by
$$
\alpha = x_0 + x_1 i + x_2 j + x_3 k \longmapsto
\bar{\alpha} = x_0 - x_1 i - x_2 j - x_3 k.
$$
from which the reduced trace and norm take the form
$$
\Tr(\alpha) = \alpha + \bar{\alpha} = 2x_0
\mbox{ and }
\Nr(\alpha) = \alpha \bar{\alpha} = x_0^2 - a x_1^2 - b x_2^2 + ab x_3^2. 
$$
The integral basis $\{1,i,j,k\}$ has the nice property of being 
an orthogonal basis with respect to the bilinear form 
$\langle{x,y}\rangle = \Nr(x + y) - \Nr(x) - \Nr(y)$
associated to the reduced norm.  
Nevertheless, the order $\cO = \ZZ\langle{i,j}\rangle$ 
is never maximal.

\subsection{Extremal orders\label{sec:prob:extremal}}

In this work we first place the focus on the {\it $p$-extremal} maximal 
orders $\cO$ containing $\pi$ such that $\pi^2 = -p$. For a general 
order there exists a unique maximal $2$-sided ideal $\mathfrak{P}$ 
over $p$, and this ideal is principal if and only if there exists such 
an element $\pi$.
The maximal ideal $\mathfrak{P}$ is a generator of the $2$-sided class 
group, and $p$-extremal orders are precisely those of trivial $2$-sided 
class number.  In the context of supersingular elliptic curves, these 
are the maximal orders which are endomorphism rings of elliptic curves 
defined over $\FF_p$ with Frobenius endomorphism~$\pi$. 

Secondly, we focus on orders with distinguished quadratic subring $R$.  
For a maximal order $\cO$ we define 
$
d(\cO) = \min\{ \disc(R) : \ZZ \ne R \subsetneq \cO \}.
$
Among all $p$-extremal maximal quaternion orders, we define 
a {\it special} $p$-extremal maximal order $\cO$ to be a 
$p$-extremal maximal order such that $d(\cO)$ is minimal.

The following lemma establishes the main properties we need for such an 
order, after which Lemmas~\ref{lem:disc-4}, \ref{lem:disc-8}, 
and~\ref{lem:disc-q} provide for their existence by explicit construction.

\begin{Lemma}
\label{lem:special-p-extremal-properties}
Let $\cO$ be a maximal order in $B_{p,\infty}$ containing a subring 
$\ZZ\langle{i,j}\rangle$ with $i^2=-q$, $j^2=-p$, and $ij = -ji$, 
for $q$ coprime to $p$. 
Set $R = \cO \cap \QQ[i]$ and let $D$ be its discriminant.
If $R$ is the ring of integers of $\QQ[i]$, then $R^\perp = Rj$ and 
$R + Rj$ is a suborder of index $|D|$ in $\cO$. 
If $\omega$ is a generator of $R$, then
$$
\Nr(x_1 + y_1\omega + (x_2 + y_2\omega)j) = f(x_1,y_1) + p f(x_2,y_2),
$$
where $f(x,y)$ is a principal quadratic form of discriminant $D$.
\end{Lemma}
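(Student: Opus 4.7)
The plan is to derive the reduced-norm formula from the decomposition $B=\QQ[i]\oplus\QQ[i]j$, then read off the index $[\cO:R+Rj]$ from the resulting Gram matrix, and finally pin down the orthogonal complement $R^\perp$ using this index.

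I would begin from the commutation rule $j\alpha=\bar\alpha j$ for $\alpha\in\QQ[i]$. For $x=s+tj$ with $s,t\in\QQ[i]$ one has $\overline{x}=\bar s-tj$, and because $R$ is commutative the cross terms in $(s+tj)(\bar s-tj)$ cancel, giving
\[
\Nr(s+tj)=\Nr(s)+p\,\Nr(t),
\]
where on the right $\Nr$ is the norm on $\QQ[i]$ (which agrees with the reduced norm of $B$ on this subfield). Specialising $s=x_1+y_1\omega$ and $t=x_2+y_2\omega$ yields the third claim with $f(x,y)=\Nr(x+y\omega)=x^2+\Tr(\omega)\,xy+\Nr(\omega)\,y^2$, the principal binary form of discriminant $\Tr(\omega)^2-4\Nr(\omega)=D$.

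Next, $\{1,\omega,j,\omega j\}$ is a $\ZZ$-basis of $R+Rj$, and $R+Rj$ is closed under multiplication (the only nontrivial case being $(Rj)(Rj)=-pR\bar R\subseteq R$), hence a $\ZZ$-order. The norm formula makes the Gram matrix of the reduced trace pairing $(x,y)\mapsto\Tr(x\bar y)$ on this basis block-diagonal, with the second $2\times 2$ block equal to $p$ times the first, so the Gram determinant is $p^2(\Tr(\omega)^2-4\Nr(\omega))^2=p^2D^2$. Since any maximal order in $B_{p,\infty}$ has Gram determinant $p^2$, the relation $\disc(R+Rj)=[\cO:R+Rj]^2\disc(\cO)$ yields $[\cO:R+Rj]=|D|$, which is the second claim.

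The orthogonality statement reduces, by the block structure above, to showing $\cO\cap\QQ[i]j=Rj$, and this is the step I expect to be the main obstacle. Setting $M=\cO\cap\QQ[i]j$, the map $tj\mapsto t$ identifies $M$ with an $R$-submodule $\mathfrak{a}\subseteq\QQ[i]$ containing $R$ (left- and right-$R$-stability follows from $jr=\bar r j$ combined with $\bar R=R$), so $\mathfrak{a}$ is an invertible fractional $R$-ideal. The integrality of $tj\in\cO$ forces $\Nr(tj)=p\,\Nr(t)\in\ZZ$ for every $t\in\mathfrak{a}$, so the integral ideal $\mathfrak{a}^{-1}$ has $N(\mathfrak{a}^{-1})\mid p$. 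If $N(\mathfrak{a}^{-1})=p$, then $R+M$ is a $\ZZ$-order (using $M\cdot M=-p\mathfrak{a}\bar{\mathfrak{a}}\subseteq R$) strictly between $R+Rj$ and $\cO$ with $[R+M:R+Rj]=p$; this forces $p\mid|D|$, contradicting $D\mid 4q$ together with $\gcd(p,q)=1$ (for $p$ odd). Hence $\mathfrak{a}=R$ and $R^\perp=Rj$.
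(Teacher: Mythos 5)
Your proof is correct, and it reaches the same three conclusions as the paper, but the step establishing $R^\perp = Rj$ takes a genuinely different route. The paper first identifies $R^\perp = \mathfrak{a}j$ with $\mathfrak{a} \supseteq R$ a fractional ideal, then observes that $p$ is ramified in $B_{p,\infty}$ and unramified in $R$, so $p$ is \emph{inert} in $R$; since $p\,\Nr(t)\in\ZZ$ on $\mathfrak{a}$ and no ideal of $R$ has norm $p$ when $p$ is inert, $\mathfrak{a}$ must already be integral, hence $\mathfrak{a} = R$. You avoid the inertness argument entirely: you compute the Gram determinant of $R+Rj$ first (which is legitimate --- it only uses that $R+Rj\subseteq\cO$ is a full lattice, not $R^\perp = Rj$), obtain $[\cO:R+Rj]=|D|$, and then rule out $N(\mathfrak{a}^{-1})=p$ because the intermediate order $R+M$ would force $p\mid |D|$, contradicting $D\mid -4q$ and $\gcd(p,2q)=1$. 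Both arguments are sound; yours trades the local-global fact about quaternion division algebras (which forces $p$ inert) for an index-divisibility computation, at the cost of a slightly longer chain of verifications (that $R+M$ is indeed an order, that $N(\mathfrak{a}^{-1})\mid p$ via the content of the norm form on $\mathfrak{a}$). The ordering is also reversed: the paper proves orthogonality first and derives the norm formula and index from it, whereas you derive the norm formula and index on $R+Rj$ directly and use them to pin down the orthogonal complement.
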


\begin{proof}
The triviality of the trace of $j$ and the anti-commuting relation 
$ij = -ji$ imply that $\QQ(i)$ has orthogonal complement $\QQ(i)j$
in $B_{p,\infty}$. Consequently $R^\perp \subset \cO$ is a lattice 
in $\QQ(i)j$ containing $Rj$, hence of the form $\mathfrak{a}j$ for 
a fractional ideal $\mathfrak{a}$ of $R$ which contains $R$. 
The prime $p$ is inert in $R$, since $p$ is ramified in $B_{p,\infty}$ 
but not in $R$.  
Since the norm is integral on $\mathfrak{a}j$, and $\Nr(j) = p$, 
it follows that $\mathfrak{a}$ is integral, hence equals $R$. 
The orthogonality of $R$ and $Rj$ implies that 
$j\beta = \bar{\beta}j$ for all $\beta$ in $R$, so $jR = Rj$ 
and $R + Rj$ is closed under multiplication. 
The form of the norm follows from orthogonality and 
multiplicativity of the norm: $\Nr(\beta_1 + \beta_2j) 
= \Nr(\beta_1) + p \Nr(\beta_2)$.  Consequently the 
discriminant of the norm form is $D^2 p^2$, from which 
we conclude that $R + Rj$ has index $|D|$ in any 
maximal order. 
\end{proof}

By convention, for our special $p$-extremal order $\cO$, we 
fix $\ZZ[i] \subseteq R$ with $i^2 = -q$ and $D = \disc(R) 
= -d(\cO)$, and $j^2 = -p$ (i.e.~$j = \pi$ above). Being 
of smallest discriminant, $R$ is necessarily a maximal order 
whose discriminant is the first of the sequence 
$$
-3, -4, -7, -8, -q \text{ for prime } q \equiv 3 \bmod 4,
$$
such that $p$ is ramified or inert in $R$.
The next three lemmas establish existence for $q = 1$, $q = 2$, 
and $q \equiv 3 \bmod 4$ prime.
These lemmas incorporate and expand on Propositions~5.1 and~5.2 
of Pizer~\cite{Pizer1980}. We recall that an order in 
a quaternion algebra is {\it Eichler} if it is the intersection 
of two maximal orders. 

\begin{Lemma}
\label{lem:disc-4}
Let $p \equiv 3 \bmod 4$ be a prime, and let $B = \QQ\langle{i,j}\rangle$ 
be the quaternion algebra given by the presentation $i^2 = -1$, $j^2 = -p$,
and $k = ij = -ji$, and set $R = \ZZ[i]$. Then $B$ is ramified only at $p$ 
and $\infty$, and $\ZZ\langle{i,j}\rangle$ is contained in exactly two 
maximal orders with index $4$, described by the inclusion chains:
$$
\ZZ\langle{i,j}\rangle 
  \subsetneq \ZZ\langle{i,\frac{1+i+j+k}{2}}\rangle
  \subsetneq 
  \left\{
  \begin{array}{l}
  \dsp \ZZ\langle{i,\frac{1+j}{2}}\rangle\ccomma\\[2.5mm]
  \dsp \ZZ\langle{i,\frac{1+k}{2}}\rangle\cdot
  \end{array}
  \right.
$$
In particular $\ZZ\langle{i,(1+i+j+k)/2}\rangle$ is an Eichler order, but 
$\ZZ\langle{i,j}\rangle$ is not.
\end{Lemma}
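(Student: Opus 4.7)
\medskip
\noindent\textbf{Proof plan.}

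I would proceed in three stages: the ramification of $B$; the classification of maximal orders containing $\cO := \ZZ\langle i,j\rangle$ via a lattice computation modulo $\cO$; and the Eichler claims.

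First, ramification of $B$ reduces to evaluating the Hilbert symbol $(-1,-p)_\ell$ at each place. Since $i^2, j^2 < 0$, $B$ is definite, hence ramified at $\infty$. Among finite places only $\ell\in\{2,p\}$ need checking, and $p\equiv 3\bmod 4$ gives $(-1,-p)_p=\bigl(\tfrac{-1}{p}\bigr)=-1$ while $-p\equiv 1\bmod 4$ forces $(-1,-p)_2=1$. So $B$ ramifies exactly at $p$ and $\infty$, and every maximal order has reduced discriminant $p$. A direct Gram-matrix computation of the reduced norm on the orthogonal basis $\{1,i,j,k\}$ gives $d(\cO)=4p$, so any maximal order $M\supseteq\cO$ satisfies $[M:\cO]=4$ and in particular $M\subseteq\tfrac12\cO$. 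Classifying such $M$ then reduces to locating all index-$4$ subgroups of $\tfrac12\cO/\cO\cong(\ZZ/2\ZZ)^4$ whose nonzero classes are integral in $B$.

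Second, parametrising a half-integer element as $(a+bi+cj+dk)/2$, integrality translates (using $p\equiv 3\bmod 4$) into the parity condition $a^2+b^2\equiv c^2+d^2\bmod 4$. Enumeration gives exactly five nonzero integral classes modulo $\cO$: the class of $\omega:=(1+i+j+k)/2$ together with $(1+j)/2$, $(1+k)/2$, $(i+j)/2$ and $(i+k)/2$. The crux---and the step I expect to be the main obstacle---is then determining which $2$-dimensional $\FF_2$-subspaces of $(\ZZ/2\ZZ)^4$ lie entirely in this integral set. A short case analysis of pairwise sums shows that only two such subspaces exist, spanned respectively by $\{(1+j)/2,(i+k)/2\}$ and by $\{(1+k)/2,(i+j)/2\}$; both contain $\omega$, since $(1+j)/2+(i+k)/2=\omega=(1+k)/2+(i+j)/2$. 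These correspond to the orders $\cO_1:=\ZZ\langle i,(1+j)/2\rangle$ and $\cO_2:=\ZZ\langle i,(1+k)/2\rangle$, with common sublattice $\cO_0:=\cO+\ZZ\omega=\ZZ\langle i,\omega\rangle$. A direct check of closure under multiplication confirms that $\cO_0,\cO_1,\cO_2$ are genuinely orders, and the identity $d(\cO_r)=d(\cO)/[\cO_r:\cO]=p$ gives maximality of $\cO_1$ and $\cO_2$. The displayed inclusion chains then follow immediately.

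Finally, since the two subspaces found above intersect in the line spanned by $\omega$, we have $\cO_1\cap\cO_2=\cO_0$, so $\cO_0$ is Eichler by definition. Conversely, $\cO_1$ and $\cO_2$ are the only maximal orders containing $\cO$, yet $\cO\subsetneq\cO_1\cap\cO_2=\cO_0$, so $\cO$ cannot be expressed as the intersection of two maximal orders and hence is not Eichler.
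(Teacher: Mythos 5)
The paper itself gives no proof of this lemma; the text only remarks that Lemmas~\ref{lem:disc-4}--\ref{lem:disc-q} ``incorporate and expand on Propositions~5.1 and~5.2 of Pizer,'' so your proposal cannot be compared to a proof in the paper. Judged on its own merits, the outline is sound and the enumeration is correct, but there is one genuine gap at the step where you assert ``in particular $M \subseteq \tfrac12\cO$.''

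The index $[M:\cO]=4$ alone gives only $4M \subseteq \cO$, i.e.\ $M \subseteq \tfrac14\cO$, and the cyclic quotient $M/\cO \cong \ZZ/4\ZZ$ is not excluded merely by integrality of trace and norm: for instance $(i+j+2k)/4$ is an integral element of $\tfrac14\cO\setminus\tfrac12\cO$ when $p=3$. So your subsequent enumeration over $\tfrac12\cO/\cO \cong (\ZZ/2\ZZ)^4$ could, a priori, miss a maximal order. The fix is easy but must be said: because $M$ is closed under multiplication by $i,j,k$, any $\alpha=(a+bi+cj+dk)/4 \in M$ must have $i\alpha,j\alpha,k\alpha$ of integral trace as well as $\alpha$ itself; computing $\Tr(\alpha)=a/2$, $\Tr(i\alpha)=-b/2$, $\Tr(j\alpha)=-pc/2$, $\Tr(k\alpha)=-pd/2$ forces $a,b,c,d$ all even, hence $\alpha\in\tfrac12\cO$. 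With that inserted, the rest of your argument --- the parity criterion $a^2+b^2\equiv c^2+d^2 \bmod 4$, the enumeration yielding exactly five nonzero integral classes and exactly two $2$-dimensional integral subspaces both containing $\omega$, the maximality via $d(\cO_r)=d(\cO)/[\cO_r:\cO]=p$, and the Eichler/non-Eichler conclusions via $\cO_1\cap\cO_2=\cO_0\supsetneq\cO$ --- is correct. One further small remark: it is worth making explicit (as you implicitly do when you ``check closure'') that an integral sublattice of $\tfrac12\cO$ containing $\cO$ need not be multiplicatively closed, so the verification that the two candidate lattices are rings is genuinely required and not a formality.
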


\begin{Lemma}
\label{lem:disc-8}
Let $p \equiv 5 \bmod 8$ be a prime, and let $B = \QQ\langle{i,j}\rangle$ 
be the quaternion algebra given by the presentation $i^2 = -2$, $j^2 = -p$,
and $k = ij = -ji$, and set $R = \ZZ[i]$. Then $B$ is ramified only at $p$ 
and $\infty$, and $\ZZ\langle{i,j}\rangle$ is contained in exactly two 
maximal orders with index $8$, described by the inclusion chains:
$$
\ZZ\langle{i,j}\rangle 
  \subsetneq \ZZ\langle{i,j,\frac{i+k}{2}}\rangle
  \subsetneq \ZZ\langle{i,\frac{i+k}{2},\frac{1+j+k}{2}}\rangle
  \subsetneq 
  \left\{
  \begin{array}{l}
  \dsp \ZZ\langle{i,\frac{1+j+k}{2},\frac{i+2j+k}{4}}\rangle\ccomma\\[2.5mm]
  \dsp \ZZ\langle{i,\frac{1+j+k}{2},\frac{i+2j-k}{4}}\rangle\cdot
  \end{array}
  \right.
$$
In particular $\ZZ\langle{i,j}\rangle$ is not an Eichler order.
\end{Lemma}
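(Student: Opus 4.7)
The proof splits into three tasks: (i) identify $B$ with $\QA$, (ii) verify the displayed chain of inclusions with the top orders being maximal of index $8$, and (iii) show that no other maximal orders contain $\ZZ\langle i,j\rangle$ (the ``not Eichler'' assertion being an easy consequence of the enumeration). Task (i) is a Hilbert-symbol exercise: $(-2,-p)_\infty = -1$ is immediate; $(-2,-p)_\ell = 1$ for odd $\ell \neq p$ since both arguments are units; and $(-2,-p)_p = \kro{-2}{p} = -1$ follows from $p\equiv 5\pmod 8$, which forces $\kro{-1}{p}=1$ and $\kro{2}{p}=-1$. The product formula then pins down $(-2,-p)_2 = 1$, confirming $B\cong \QA$.

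For task (ii), I would start by computing the reduced discriminant of $\cO_0 := \ZZ\langle i,j\rangle$ via the reduced-trace Gram matrix on $\{1,i,j,k\}$; this is diagonal with entries $2,4,2p,4p$, yielding $\disc(\cO_0) = 8p$. Since a maximal order in $\QA$ has reduced discriminant $p$, every maximal overorder of $\cO_0$ has index $8$. I would then verify that each generator introduced in the chain is integral (trace and norm in $\ZZ$): the critical computation is $\Nr((i+2j\pm k)/4) = (1+3p)/8$, where the hypothesis $p\equiv 5\pmod 8$ is exactly what is needed to make this an integer. Checking that each new generator lies outside the previous order (so that the step is genuinely of index $2$) and verifying multiplicative closure on generators produces a chain of total index $8$; the two top orders are therefore maximal.

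For task (iii), the key structural fact is that $B$ is split at $2$, so maximal overorders of $\cO_0$ correspond to vertices of the $2$-adic Bruhat--Tits tree that contain $\cO_0\otimes\ZZ_2$. Working locally at $2$, I would proceed level-by-level: at each stage, classify the index-$2$ overorders of the current order $\cO$ by enumerating cosets $\alpha + 2\cO$ with $4\mid \Nr(\alpha)$ and checking which representatives $\alpha/2$ produce a multiplicatively closed extension. This forces uniqueness at the first two intermediate steps and yields precisely two choices at the top. The ``not Eichler'' conclusion is then immediate: both maximal overorders contain $\cO_2 := \ZZ\langle i, (i+k)/2, (1+j+k)/2\rangle \supsetneq \cO_0$, so their intersection strictly contains $\cO_0$ and cannot equal it.

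The main obstacle is task (iii). The discriminant and symbol arguments in (i) and (ii) amount to routine bookkeeping, but ruling out alternative intermediate orders requires either a careful local analysis on the Bruhat--Tits tree or a direct coset enumeration in $\cO/2\cO$ at every level, filtered by integrality of the norm and by ring closure. The fact that a binary choice appears only at the last step, rather than earlier, is what demands the most care.
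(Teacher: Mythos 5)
The paper gives no proof of this lemma; it only remarks that Lemmas~\ref{lem:disc-4}--\ref{lem:disc-q} ``incorporate and expand on Propositions~5.1 and~5.2 of Pizer,'' so your proposal is effectively an independent reconstruction rather than a comparison target. Judged on its own: parts (i) and (ii) are correct and complete. The Hilbert-symbol bookkeeping is right (the product formula really does force $(-2,-p)_2 = 1$); the reduced discriminant of $\ZZ\langle i,j\rangle$ is indeed $8p$, so all maximal overorders have index $8$; and the crucial norm computation $\Nr\bigl((i+2j\pm k)/4\bigr) = (1+3p)/8$ is exactly where the hypothesis $p\equiv 5\bmod 8$ enters, since $1+3p\equiv 0\bmod 8$ is equivalent to $p\equiv 5\bmod 8$. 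The deduction that $\ZZ\langle i,j\rangle$ is not Eichler is also correct: once the enumeration shows every maximal order containing $\ZZ\langle i,j\rangle$ contains the first intermediate order $\ZZ\langle i,j,(i+k)/2\rangle$, the intersection of any two of them strictly contains $\ZZ\langle i,j\rangle$.

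The one place your proposal stops short of a proof is exactly where you flag it: part (iii). The assertion ``this forces uniqueness at the first two intermediate steps and yields precisely two choices at the top'' is the content of the lemma, and it needs to be carried out, not just framed. The framework is sound, though, and you can avoid invoking the Bruhat--Tits tree explicitly: since the discriminant jump is entirely at $2$, an index-$2$ overorder of a given order $\cO$ corresponds to a nonzero coset $\alpha + 2\cO$ in $\cO/2\cO$ with $\Tr(\alpha)\equiv 0\pmod 2$, $\Nr(\alpha)\equiv 0\pmod 4$, and $\cO + \ZZ\,\tfrac{\alpha}{2}$ closed under multiplication by the generators of $\cO$. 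Enumerating the $16$ cosets (or $15$ nonzero ones) at each of the three levels, filtered by these conditions, is a finite check that produces a unique admissible coset at the first two levels and exactly two at the third. Writing that enumeration out (or pinning the first step down by observing, as you implicitly do, that $(i+k)/2$ is integral whereas representatives like $(1+i)/2$, $(1+j)/2$, $(1+k)/2$, $(j+k)/2$, $(i+j)/2$, $(1+i+j+k)/2$ fail either integrality or closure when $p\equiv 5\bmod 8$) is what turns your outline into a proof.
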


\begin{Lemma}
\label{lem:disc-q}
Let $p$ and $q$ be primes, with 
$p \equiv 1 \bmod 4$,  
$q \equiv 3 \bmod 4$, and  
$$
\kro{-p}{q} = 1.
$$
Let $B = \QQ\langle{i,j}\rangle$ be the quaternion algebra given 
by the relations $i^2 = -q$, $j^2 = -p$, and $k = ij = -ji$, 
and set $R = \ZZ[(1+i)/2]$. Then $B$ is ramified only at $p$ 
and $\infty$, and $\ZZ\langle{(1+i)/2,j}\rangle = R + Rj$ is 
contained in exactly two maximal orders with index $q$, described 
by the inclusion chains:
$$
\ZZ\langle{(1+i)/2,j}\rangle 
  \subsetneq 
  \left\{
  \begin{array}{l}
  \dsp \ZZ\langle{\frac{1+i}{2}\ccomma\, j\,\ccomma\frac{ci+k}{q}}\rangle\ccomma\\[2.5mm]
  \dsp \ZZ\langle{\frac{1+i}{2}\ccomma\, j\,\ccomma\frac{ci-k}{q}}\rangle\ccomma
  \end{array}
  \right.
$$
where $c$ is any root of $x^2 + p \bmod q$. In particular $R + Rj$ 
is an Eichler order.
\end{Lemma}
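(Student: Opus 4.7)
My plan is to establish three things in sequence: (i) the ramification locus of $B$, (ii) that the two displayed lattices are maximal orders containing $R + Rj$ with index $q$, and (iii) that no other maximal overorder exists, and that their intersection is $R + Rj$.

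For (i), I will compute Hilbert symbols $(-q,-p)_v$ at every place. Definiteness handles $\infty$, and triviality at odd primes $\ell \notin \{p,q\}$ is automatic as both entries are units. At $q$, the hypothesis $\kro{-p}{q} = 1$ together with Hensel's lemma yields $\sqrt{-p} \in \QQ_q$, so $B \otimes \QQ_q$ splits. At $p$, I will chain quadratic reciprocity with $p \equiv 1 \bmod 4$ and $q \equiv 3 \bmod 4$ to deduce $\kro{-q}{p} = -1$. Consequently $\QQ_p(i)$ is the unramified quadratic extension of $\QQ_p$, and since $-p$ has odd $p$-adic valuation it is not a norm, so $B$ ramifies at $p$. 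The product formula then forces unramification at $2$.

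For (ii), I first invoke Lemma \ref{lem:special-p-extremal-properties}: as $R = \ZZ[(1+i)/2]$ has discriminant $-q$ and $p$ is ramified in $B$ but inert in $R$, any maximal order containing $R + Rj$ does so with index exactly $q$. Writing $ci + k = i(c + j)$, the element $(ci+k)/q$ has reduced trace $0$ and reduced norm $\Nr(i)\Nr(c+j)/q^2 = (c^2+p)/q \in \ZZ$ by the defining congruence, hence is integral; likewise for $(ci-k)/q$. The central computational task is to check that $\Lambda_\pm := \ZZ\langle (1+i)/2,\, j,\, (ci\pm k)/q \rangle$ is closed under multiplication; this reduces to computing $(ci\pm k)/q$ times each of $(1+i)/2$ and $j$, and the square $((ci\pm k)/q)^2$, using the relations $ki = qj$, $ik = -qj$, $jk = pi$, $kj = -pi$, and reducing the numerators modulo $q$ via $c^2 \equiv -p$. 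A Smith-normal-form computation then confirms $[\Lambda_\pm : R+Rj] = q$, matching the upper bound from Lemma \ref{lem:special-p-extremal-properties} and forcing each $\Lambda_\pm$ to be maximal.

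For (iii), since $q$ is prime, any maximal overorder $\cO$ of $R + Rj$ has $\cO/(R+Rj)$ of order $q$. Localizing at $q$, where $B$ splits and $R \otimes \ZZ_q$ is the ring of integers of the ramified extension $\QQ_q(\sqrt{-q})$, the maximal orders of $M_2(\QQ_q)$ containing $(R+Rj)\otimes \ZZ_q$ form the two endpoints of a single edge in the Bruhat--Tits tree; globally these are precisely the two $\Lambda_\pm$, distinguished by the two roots $\pm c$ of $x^2 + p$ mod $q$. Finally, $\Lambda_+ \cap \Lambda_-$ is a sublattice of $\Lambda_+$ of index dividing $q$; if it strictly contained $R + Rj$, then by primality it would equal $\Lambda_+$, forcing $\Lambda_+ = \Lambda_-$ by maximality---a contradiction. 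Hence $R + Rj = \Lambda_+ \cap \Lambda_-$ is Eichler.

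The principal obstacle I anticipate is the multiplication-closure verification in (ii): although elementary, one must handle several products carefully, reducing each via $c^2 \equiv -p \bmod q$ and re-expressing the result in the claimed generating set while consistently tracking the sign choice that separates $\Lambda_+$ from $\Lambda_-$.
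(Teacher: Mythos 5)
The paper does not actually prove Lemmas~\ref{lem:disc-4}--\ref{lem:disc-q}; it only remarks that they ``incorporate and expand on Propositions~5.1 and~5.2 of Pizer,'' so there is no in-text argument to compare against. Your proposal supplies a correct and natural verification. Part~(i) is right: quadratic reciprocity with $p\equiv1\bmod4$ gives $\kro{-q}{p}=\kro{q}{p}=\kro{p}{q}$, and $\kro{-p}{q}=1$ with $q\equiv3\bmod4$ forces $\kro{p}{q}=-1$; since $-p$ has odd $p$-adic valuation and $\QQ_p(\sqrt{-q})$ is unramified, $B$ is ramified at $p$, and the product formula clears~$2$. Part~(ii) is the right calculation: $\Lambda_\pm=R+Rj+\ZZ\cdot\frac{ci\pm k}{q}$ is already a $\ZZ$-module of index $q$ over $R+Rj$, and closure under multiplication reduces to the handful of products you list, all of which land back in that module after reducing via $c^2\equiv -p\bmod q$ (for instance $e\omega=-1+\omega+\omega j-e$ and $ej=e+(c-p)i/q\cdot q=\dots$ etc.).

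One organizational caution in part~(iii): as written you invoke the Bruhat--Tits description (``the two endpoints of a single edge'') \emph{before} you have established that $(R+Rj)\otimes\ZZ_q$ is hereditary, which is what that description presupposes, and you only derive Eichler-ness in the final sentence. The argument closes cleanly if you reorder it: first show $\Lambda_+\neq\Lambda_-$ (e.g.\ $\Lambda_+=\Lambda_-$ would force $2k/q\in\Lambda_+$, contradicting integrality of $\Nr(2k/q)=4p/q$); then the chain $R+Rj\subseteq\Lambda_+\cap\Lambda_-\subsetneq\Lambda_+$ together with $[\Lambda_+:R+Rj]=q$ prime gives $\Lambda_+\cap\Lambda_-=R+Rj$, so $R+Rj$ is Eichler of level~$q$; \emph{only then} does the local tree picture at $q$ (a single edge, hence exactly two vertices) certify that $\Lambda_\pm$ are the \emph{only} maximal overorders. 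With that reshuffle the proof is complete and is, in substance, the argument Pizer's cited propositions carry out.
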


Under the generalized Riemann hypothesis, for $p \equiv 1 \bmod 4$, 
the smallest $q$ satisfying the conditions of the last lemma is 
$O(\log(p)^2)$ by a result of Ankeny~\cite{Ankeny1952} (or explicitly 
$q < 2\log(p)^2$ by Bach~\cite{Bach1990}).
In the remainder of this paper, we will assume that $\QA$, $\cO$, 
and $R$ are suitably constructed from these 
lemmas with $\disc(R)$ the minimal discriminant in which $p$ is inert 
in the sequence $-3$, $-4$, $-7$, $-8$, or $-q$ for $q \equiv 3 \bmod 4$ 
prime. 

\subsection{Reduced norms and ideal morphisms}
\label{sec:ideals:properties}

Now suppose that $\cO$ is any maximal order.
We recall that the reduced norm on $\QA$ induces a reduced norm 
on left ideals defined by any of the equivalent conditions
$$
\Nr(I) :=  \sqrt{|\cO/I|} 
= \gcd\left(\{\, \Nr(\alpha) \;:\; \alpha \in I \,\}\right),
$$
or by $I\bar{I} = \Nr(I)\cO$.  It follows that the 
reduced norm on ideals is multiplicative 
and compatible with the reduced norm on elements $\Nr(\alpha) = 
\Nr(\alpha\cO) = \Nr(\cO\alpha)$. 
If $I$ and $J$ are left $\cO$-ideals, a homomorphism of $I$ to $J$ 
is a map given by $\alpha \mapsto \alpha\gamma$ for $\gamma$ in 
$\QA^*$, which is an isomorphism if $J = I\gamma$.
By the multiplicativity of the reduced norm, isomorphisms 
are similitudes of quadratic modules (with respect to the 
reduced norm).  In particular, an isomorphism sends a 
reduced basis to a reduced basis. In fact the normalized 
norm map 
$$
q_I = \frac{\Nr}{\Nr(I)} : I \longrightarrow \ZZ
$$
remains invariant under this isomorphism, in the sense that $q_I(\alpha)
= q_J(\beta)$ for $\alpha$ in $I$ and $\beta = \alpha\gamma$ in $J$.
The normalized norm $q_I$ is a positive-definite integral quadratic map, 
whose bilinear module given by 
$
\langle{x,y}\rangle = q_I(x+y) - q_I(x) - q_I(y)
$
has determinant $p^2$. This follows from the same property for any maximal order 
(see Pizer~\cite[Proposition~1.1]{Pizer1980}), since $|\cO/I| = \Nr(I)^2$,
and the fact that any submodule of index $m$ in a quadratic module $L$ 
has determinant $m^2\det(L)$.

The following lemma serves to replace an ideal $I$ with an isomorphic 
one of different reduced norm.

\begin{Lemma}
\label{lem:ideal_norm_rep}
Let $I$ be a left $\cO$-ideal of reduced norm $N$ and $\alpha$ 
an element of $I$. 
Then $I\gamma$, where $\gamma = \bar{\alpha}/N$, is a left 
$\cO$-ideal of norm $q_I(\alpha)$.
\end{Lemma}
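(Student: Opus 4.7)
The plan is to exploit the identity $I\bar I = \Nr(I)\cO = N\cO$ recalled just before the lemma, which identifies $I^{-1} = N^{-1}\bar I$. First I would observe that since $\alpha \in I$ we have $\bar\alpha \in \bar I$, so $\gamma = \bar\alpha/N$ lies in $I^{-1}$. Multiplying on the left by $I$ therefore gives $I\gamma \subseteq I\cdot I^{-1} = \cO$, which shows in particular that $I\gamma$ is an integral right $\cO$-lattice. That it is a left $\cO$-ideal (in the sense of a left $\cO$-module that spans $\QA$ over $\QQ$) is automatic: the relation $\cO(I\gamma) = (\cO I)\gamma = I\gamma$ follows from $\gamma \in \QA^*$ being central with respect to left $\cO$-multiplication acting on the left, and $\gamma \neq 0$ implies $I\gamma$ still spans $\QA$.

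Next I would compute the reduced norm of $I\gamma$ using multiplicativity of the norm on ideals together with compatibility between the norm on ideals and on elements, both recalled in Section~\ref{sec:ideals:properties}. We have
$$
\Nr(\gamma) = \gamma\bar\gamma = \frac{\bar\alpha}{N}\cdot\frac{\alpha}{N} = \frac{\Nr(\alpha)}{N^2},
$$
so
$$
\Nr(I\gamma) = \Nr(I)\,\Nr(\gamma) = N\cdot\frac{\Nr(\alpha)}{N^2} = \frac{\Nr(\alpha)}{N} = q_I(\alpha),
$$
which is the stated identity.

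The main conceptual step is the first one, namely recognizing that right multiplication by $\bar\alpha/N$ is precisely the standard trick that sends a left $\cO$-ideal to an equivalent integral ideal whose norm is the normalized norm of the chosen element. Once that interpretation is in place, everything reduces to bookkeeping with the multiplicative relation $I\bar I = N\cO$ and with the definition $q_I = \Nr/\Nr(I)$, and no further structural information about $\cO$ or $\QA$ is needed; in particular, the $p$-extremality or the existence of the distinguished subring $R$ developed in the preceding subsections plays no role here. The only thing to be slightly careful about is that $\alpha$ may be zero, in which case the statement is vacuous (or one requires $\alpha \neq 0$ so that $I\gamma$ is a genuine lattice); this is the mildest of edge cases and can be handled in one line.
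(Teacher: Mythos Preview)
Your proof is correct and follows essentially the same route as the paper: both arguments compute $\Nr(I\gamma)$ by multiplicativity and establish integrality via $\bar\alpha \in \bar I$ together with $I\bar I = N\cO$, the only difference being that you phrase the latter through $I^{-1} = N^{-1}\bar I$ while the paper writes $I\bar\alpha \subseteq I\bar I = N\cO$ directly. Your remark on the trivial edge case $\alpha = 0$ and the brief justification that $I\gamma$ remains a full-rank left $\cO$-module are harmless additions; the only quibble is the phrase ``$\gamma$ being central with respect to left $\cO$-multiplication,'' which should simply be associativity of left and right multiplication rather than any centrality of $\gamma$.
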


\begin{proof}
By the multiplicativity of the reduced norm, and $\Nr(\alpha) = \Nr(\bar{\alpha})$, 
we have 
$$
\Nr(I\gamma) = \Nr(I)\Nr(\gamma) 
= N\frac{\Nr(\alpha)}{N^2} = \frac{\Nr(\alpha)}{N} = q_I(\alpha).
$$
Clearly $I$ is a fractional left $\cO$-ideal, so it remains 
to show that $I\gamma \subseteq \cO$.  
Since $\cO\alpha \subseteq I$, we have $\bar{\alpha} \subseteq \bar{I}$, 
and hence $I\bar{\alpha} \subseteq I\bar{I} = N\cO$, from which 
$I\gamma \subseteq \cO$ follows.
\end{proof}

\section{Preliminary algorithmic results\label{sec:prel}}

In this section, we provide two algorithmic tools that will be used to solve the quaternion $\ell$-isogeny path problem in Section~\ref{sec:ideals:ellpow}. The first algorithm computes prime norm representatives in ideal classes. The second one computes representations of integers by the norm form of a $p$-extremal order.


\subsection{Computing prime norm representatives in ideal classes}
\label{sec:ideals:prime}


Given a maximal order $\cO$ and a left $\cO$-ideal $I$, we 
give a probabilistic 
algorithm that computes another left $\cO$-ideal $J = I\gamma$ 
in the same class, but with prime norm.
Using Lemma~\ref{lem:ideal_norm_rep}, this problem reduces to the  problem of finding a prime represented by $q_I$. 
\vspace{2mm}

\noindent{\bf Prime norm algorithm.}
Given a left $\cO$-ideal $I$ of norm $N$, with a Minkowski-reduced basis 
$\{\alpha_1,\alpha_2,\alpha_3,\alpha_4\}$. 
Generate random elements $\alpha = \sum_i x_i \alpha_i$ with 
$(x_1,x_2,x_3,x_4)$ in a box $[-m,m]^4$ until finding an element $\alpha$ 
of $I$ with $q_I(\alpha)$ prime, and return $I(\bar{\alpha}/N)$.
\vspace{2mm}

Assuming that numbers represented by $q_I$ behave like random 
numbers, it remains to ensure that $q_I([-m,m]^4)$ contains 
sufficiently many primes to have a high probability of finding 
one.
If $\{\alpha_1,\alpha_2,\alpha_3,\alpha_4\}$ is a Minkowski-reduced 
basis, the $q_I(\alpha_i)$ attain the successive minima, and we 
have the bounds
$$
p^2 \le 16 q_I(\alpha_1) q_I(\alpha_2) q_I(\alpha_3) q_I(\alpha_4) \le 4 p^2,
$$
where $q_I(\alpha_i) \le q_I(\alpha_{i+1})$. For a generic ideal 
$I$ we expect $q_I(\alpha_4)$ to be in $\tilde{\bigO}(\sqrt{p})$.  
In the worst case, $q_I(\alpha_4)$ is in $\tilde{\bigO}(p)$ when 
$I$ equals an order $\cO$ containing a subring $R$ with $|\disc(R)|$ 
in $O(\log(p)^n)$.
Assuming $I$ is generic, we expect to find $\alpha$ with 
$q_I(\alpha)$ in $\tilde{\bigO}(m^2 \sqrt{p})$.
In practice, we find sufficiently many primes $q_I(\alpha)$ for 
$m$ which grows polynomially in $\log(p)$. 
However to provably terminate, even under the GRH, it may be 
necessary to allow $m$ to exceed a function in $O(\sqrt[4]{p})$, in which case the output may exceed $O(p)$.

We implemented a prime norm algorithm in Magma~\cite{Magma}. 
We tested it on ideals of $\ell$-power norms generated via 
a random walk from a given maximal order. 
All our computations with primes of up to 200 bits and 
random ideals took seconds on an Intel Xeon CPU X5500 
processor with 24 GB RAM running at 2.67GHz. 
The norms of the output ideals $J$ were experimentally only 
slightly larger than $\sqrt{p}$. 
The experimental results are given in Appendix~\ref{sec:primeres}.

\subsection{Representing integers by special orders}
\label{sec:repns_in_orders}

We also consider the problem of representing a sufficiently 
large positive integer $M$ by the norm form of $\cO$.  Suppose that 
$\cO$ is a $p$-extremal order, with suborder $R + Rj$, and 
let $D = \disc(R)$.  We let $\Phi(x)$ be a monotone function such 
that a suitable interval $[x,x+\Phi(x)]$ contains
sufficiently many primes,
and we assume that $M \ge p\,\Phi(M)$.  
If $\omega$ is a reduced generator of $R$ (of trace $0$ or $\pm 1$), 
then the norm form on $R + Rj$ is of the form
$$
\Nr(\alpha + \beta j) = f(x_1,y_1) + p f(x_2,y_2),
$$
where $\alpha = x_1 + y_1 \omega$ and $\beta = x_2 + y_2 \omega$, and 
$f(x,y)$ is a principal form. For $(x,y)$ in $[-m,m]^2$ with 
$m = \lfloor\sqrt{\Phi(M)/|D|}\rfloor$, we have $f(x,y) < \Phi(M)$ 
and $\Nr(\beta j) < p\,\Phi(M) < M$. 
This gives the following algorithm on which we build our strong 
approximation algorithm.
\vspace{2mm} 

\noindent{\bf Integer representation.}
Given an integer $M \ge p\,\Phi(M)$. 
Set $m = \lfloor\sqrt{\Phi(M)/|D|}\rfloor$, and choose 
$(x_2,y_2)$ at random in $[-m,m]^2$ until finding a prime 
$r = M - p f(x_2,y_2)$ which is split in $R$ and for which 
a prime $\mathfrak{r}$ over $r$ is principal.  
Let $\alpha = x_1 + y_1 \omega$ be a generator for $\mathfrak{r}$,
set $\beta = x_2 + y_2 \omega$, and return $\alpha + \beta j$.
\vspace{2mm} 

Clearly the output has norm $M$. We assume that primes have density 
$1/\log(M)$ in the arithmetic progression $M - p\,[0,\Phi(M)]$. 
Moreover we assume that such primes are equidistributed among 
primes which are non-split and split in $R$ and, in the latter 
case, among each of the $h(R)$ ideal classes of $R$. 
Finally, we must assume that elements $\beta = x_2 + y_2 \omega$ 
give rise to integers $r = M - p\,\Nr(\beta)$ with the same primality probabilities as random integers in the range
$M - p\,[0,\Phi(M)]$.  
Under such heuristic assumptions, the expected number of random 
$\beta$ to be tested is $2 h(R) \log(M)$. 
Detecting a prime~$r$, solving for a representative prime $\mathfrak{r}$ 
over $r$, and determination of a principal generator can be done 
in expected polynomial time by Cornaccia's algorithm~\cite{Cornacchia1903}.

Under the heuristic assumptions made above, we can appeal to average 
distributions among all arithmetic progressions $a - p\,[0,\Phi(M)]$,
for representatives $a$ of $(\ZZ/p\ZZ)^*$. In the application that 
follows, $M$ will be of the form $\ell^e$ or $N\ell^e$, and we can 
adapt to failure to find primes in a particular arithmetic progression 
sparsely populated with primes by changing~$e$.

\section{Main algorithm}
\label{sec:ideals:ellpow}

%

In this section, we provide an algorithm to solve the quaternion $\ell$-isogeny path problem. We also sketch a generalization of our approach to build ideal class representatives with powersmooth norms.

\subsection{Overview of the algorithm}
\label{sec:algorithm_overview}


We reduce the quaternion $\ell$-isogeny problem to a restricted version of the same problem, where we assume that $\cO$ is a special $p$-extremal maximal order 
with suborder $R + Rj$  as defined in Section~\ref{sec:intro:QA}. We also assume that $I$ is a left $\cO$-ideal with reduced norm $N$, where $N$ is a (large) prime coprime to $\ell$, $|\disc(R)|$ and $p$. 
 A reduction from generic left $\cO$-ideals to left $\cO$-ideals with the required norms can be effectively performed with the algorithm of Section~\ref{sec:ideals:prime}.
A reduction from general maximal orders to special $p$-extremal orders will be provided in Section~\ref{sec:ideals:gen}.

Using Lemma~\ref{lem:ideal_norm_rep}, the quaternion $\ell$-isogeny path problem is also reduced to an effective strong approximation theorem in Section~\ref{sec:ideals:strongapproximation}. In particular if the ideal is given by a pair of generators $I = \cO(N,\alpha)$, the quaternion $\ell$-isogeny path problem is reduced  to finding $\lambda \in \ZZ$ coprime to $N$ and 
$$
\beta \equiv \lambda\alpha \bmod N\cO
$$ 
with $\Nr(\beta) = N\ell^e$ for some positive integer~$e$.

Sections~\ref{sec:ideals:ellpow:isom}, \ref{sec:ideals:ellpow:lift}, and~\ref{sec:ideals:ellpow:results} describe the core of our approach to solve this problem.
Since the index of $R + Rj$ in $\cO$ is coprime to $N$, we 
have an isomorphism
$$
\frac{R + Rj}{N(R + Rj)} \cong \frac{\cO}{N\cO}\cdot
$$
We can therefore choose representative elements in $R + Rj$ 
as convenient to simplify the algorithm. Since the index 
$[\cO:R + Rj] = |\disc(R)|$ is assumed to be small (in $O(\log(p)^2)$ under the GRH), the size of the output might be slightly 
larger, but the distinction is asymptotically insignificant.
%
A direct approach to the strong approximation problem 
to solve for $\beta$ seems daunting, so instead we reduce 
to the following steps:
\begin{enumerate}
\item
Solve for a random $\gamma \in \cO$ of reduced norm $N\ell^{e_0}$.
\item
Solve for $[\mu]$ in $(\cO/N\cO)^*$ such that $(\cO\gamma/N\cO)[\mu] = I/N\cO$.
\item
Solve for the strong approximation of $[\mu]$ (modulo $N$) 
by $\mu$ in $\cO$ of reduced norm $\ell^{e_1}$.
\end{enumerate}
Here we denote the element $\mu + N\cO$ of $\cO/N\cO$ by $[\mu]$ 
to distinguish it from the conjugate $\bar{\mu}$ of $\mu $. 
The output $\beta = \gamma\mu$ is then an element of $I$ with 
reduced norm $N\ell^e$ where $e = e_0+e_1$.
The element $\gamma$ can be constructed with the algorithm of 
Section~\ref{sec:repns_in_orders}. We solve 
for $[\mu]$ by linear algebra in Section~\ref{sec:ideals:ellpow:isom}, showing that we can take 
$[\mu]$ in $(R/NR)^*[j] \subseteq (\cO/N\cO)^*$. 
The core of the algorithm is the final specialized strong 
approximation algorithm of Section~\ref{sec:ideals:ellpow:lift}, 
taking $[\mu]$ in $(R/NR)^*[j]$ and constructing the lifting 
$\mu$ of norm $\ell^e$. 
The whole algorithm for $p$-extremal orders is analyzed in Section~\ref{sec:ideals:ellpow:results}. 

As mentioned above, we finally remove the $p$-extremal condition in Section~\ref{sec:ideals:gen} by providing a reduction from the general case to the case of $p$-extremal orders, and we generalize our approach to compute ideal representatives of smooth or powersmooth norms in Section~\ref{sec:ideals:psmooth}.



\subsection{Effective strong approximation\label{sec:ideals:strongapproximation}}

Let $B := B_{p,\infty}$ be the quaternion algebra ramified at $p$ and $\infty$.
Let $\AA_\QQ$ be the rational ad\`ele ring, defined as the restricted 
product of $\QQ_v$ with respect to $\ZZ_v$, let $\ell \ne p$ be a 
``small'' prime, and let $\AA_{\QQ,\ell}$ be the restricted product 
over all $v \ne \ell$.  
Let $\AA_B = B \otimes_\QQ \AA_\QQ$ be the ad\`ele ring of $B$, 
and $\AA_{B,\ell} = B \otimes \AA_{\QQ,\ell}$.   
Then $B$ embeds diagonally in $\AA_B$ and is discrete in 
$\AA_B$ (see~\cite[Section 14]{Cassels1967}). The strong 
approximation theorem (see~\cite[Section 15]{Cassels1967}) 
asserts that $B$ is dense in $\AA_{B,\ell}$ (see also 
Th\'eor\`eme Fondamental 1.4, p.~61 of Vign\'eras~\cite{Vigneras1980}).

The strong approximation theorem can be viewed as a strong version 
of the Chinese remainder theorem.  We apply this to find an element 
of a left $\cO$-ideal $I$ which generates $I$ almost everywhere.  
Each such ideal is known to be generated by two elements $N$ and 
$\alpha$, where we may take $N = \Nr(I)$ for the first generator.
This follows since locally $\cO_v = \cO \otimes \ZZ_v$ is a left 
principal ideal ring, hence so is the quotient $\cO/N\cO$. 

If $I = \cO(N,\alpha):=\cO N+\cO\alpha$, the approximation theorem 
implies that we can find $\beta$ in $I$ such that 
$$
\beta \equiv \alpha \bmod N\cO
$$ 
and $\Nr(\beta) = N\ell^e$ for some positive integer~$e$,
from which $I = \cO(N,\alpha) = \cO(N,\beta)$.  
By Lemma~\ref{lem:ideal_norm_rep}, an effective version of this strong approximation theorem is sufficient to solve the quaternion $\ell$-isogeny path problem.
In particular, since $\beta$ is in $I$, the ideal $I\bar\beta/N$ is an isomorphic ideal of norm $\ell^e$.  

Similarly, solving for
$$
\beta \equiv \lambda\alpha \bmod N\cO
$$ 
with $\lambda \in \ZZ$ coprime to $N$ such that we still have 
$I = \cO(N,\beta)$, is also sufficient to solve the quaternion $\ell$-isogeny path problem.
We will focus on this relaxed effective strong approximation theorem in the next subsections.

\subsection{Isomorphism of $\cO/N\cO$-ideals}
\label{sec:ideals:ellpow:isom}

In this section, let $I$ be a left $\cO$-ideal of prime norm $N \ne p$,
and let $\gamma$ be an arbitrary element of $\cO$ of norm $NM$, where 
$\gcd(N,M) = 1$.  
Since $N$ is large, we can assume that it does not divide the 
index $[\cO:R+Rj]$, hence we have equalities of rings
$$
\cO/N\cO = (R+Rj)/N(R + Rj) \cong \Mat_2(\ZZ/N\ZZ). 
$$ 
We denote by $[\alpha]$ the class of an element $\alpha$ in $\cO/N\cO$ 
(as distinct from its conjugate $\bar{\alpha}$).

We note that $\cO\gamma/N\cO$ and $I/N\cO$ are proper nonzero 
left $\cO/N\cO$-ideals.  The following explicit classification 
of such ideals, in $\Mat_2(\ZZ/N\ZZ)$, will let us construct 
an explicit isomorphism between these ideals.

\begin{Lemma}
Let $N$ be a prime and $A = \Mat_2(\ZZ/N\ZZ)$. There exists a bijection
$$
S : \PP^1(\ZZ/N\ZZ) \times \PP^1(\ZZ/N\ZZ) \longrightarrow 
\frac{\{\, \gamma \in A\backslash\{0\} \;:\; \det(\gamma) = 0 \,\}}{(\ZZ/N\ZZ)^*}\ccomma 
$$
given by 
$$
S\big((u:v),(x:y)\big) = 
\left(\begin{array}{@{}cc@{}} ux & uy \\ vx & vy \end{array}\right)\cdot
$$
Under this correspondence, the set of proper nontrivial left $A$-ideals 
is in bijection with the set
$$
\{\, \PP^1(\ZZ/N\ZZ) \times (x:y) : (x:y) \in \PP^1(\ZZ/N\ZZ) \,\},
$$
and the right action of $A^*/(\ZZ/N\ZZ)^* = \PGL_2(\ZZ/N\ZZ)$ on 
left $A$-ideals is transitive and induced by the natural (transpose)
action on $\PP^1(\ZZ/N\ZZ)$.
\end{Lemma}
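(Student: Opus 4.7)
The plan is to identify $S((u:v),(x:y))$ with the outer product $\binom{u}{v}\begin{pmatrix}x & y\end{pmatrix}$; this viewpoint makes essentially everything immediate. First I would check well-definedness: rescaling $(u,v)$ by $\alpha\in(\ZZ/N\ZZ)^*$ and $(x,y)$ by $\beta\in(\ZZ/N\ZZ)^*$ multiplies the outer product by $\alpha\beta$, which is absorbed by the $(\ZZ/N\ZZ)^*$-quotient on the target. The output is manifestly a nonzero rank-one matrix (so nonzero with zero determinant).

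For the bijection, since $N$ is prime, $k:=\ZZ/N\ZZ$ is a field, and every nonzero singular $2\times 2$ matrix over $k$ has rank one and factors as an outer product $\mathbf{u}\mathbf{x}^T$, uniquely up to the substitution $(\mathbf{u},\mathbf{x}^T)\mapsto(\lambda\mathbf{u},\lambda^{-1}\mathbf{x}^T)$ with $\lambda\in k^*$. The two independent $k^*$-scalings on the source thus collapse onto the single $k^*$-scaling on the target, so after passage to the quotient $S$ is injective; surjectivity is the factorization statement itself, where $(u:v)$ is recovered as the column space of $\gamma$ and $(x:y)$ as the row space.

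For the classification of left ideals, the key computation is $M\gamma = M(\mathbf{u}\mathbf{x}^T)=(M\mathbf{u})\mathbf{x}^T$, which shows left multiplication acts only on the column factor. As $M$ ranges over $A$ and $\mathbf{u}\ne 0$, the vector $M\mathbf{u}$ ranges over all of $k^2$, so $A\gamma = \{\mathbf{w}\mathbf{x}^T:\mathbf{w}\in k^2\}$ depends only on $(x:y)$ and coincides with $S(\PP^1(\ZZ/N\ZZ)\times\{(x:y)\})\cup\{0\}$. Conversely, any proper nonzero left $A$-ideal contains a rank-one element — any nonzero element would otherwise be invertible in $A$, contradicting properness — so it has this form. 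Symmetrically, $\gamma M = \mathbf{u}(M^T\mathbf{x})^T$ shows that the right $\PGL_2(\ZZ/N\ZZ)$-action on the set of nontrivial left ideals is precisely the transpose action on the parametrizing $\PP^1$-factor, and this is transitive because $\PGL_2$ acts transitively on $\PP^1$.

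The only real subtlety is bookkeeping: tracking the two separate scalar ambiguities on the source side and verifying that they project consistently onto the single $k^*$-scaling on the matrix target. The identity $(\lambda_1\mathbf{u})(\lambda_2\mathbf{x})^T=\lambda_1\lambda_2\,\mathbf{u}\mathbf{x}^T$ shows the diagonal $k^*\times k^* \to k^*$ map is just multiplication, so the quotient descends cleanly and no genuine obstacle arises.
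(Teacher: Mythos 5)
Your argument is correct and is essentially the same idea as the paper's, but rendered in concrete linear-algebraic language rather than geometric language. The paper identifies the nonzero singular matrices modulo scalars with the image of the Segre embedding $\PP^1 \times \PP^1 \hookrightarrow \PP^3$, then asserts that left and right multiplication act on the two $\PP^1$ factors and that ``the result follows.'' You unfold the Segre map into the outer-product factorization $\gamma = \mathbf{u}\mathbf{x}^T$ of a rank-one matrix, and this lets you actually verify the steps the paper leaves implicit: the uniqueness of the factorization up to the diagonal $k^*$-scaling $(\mathbf{u},\mathbf{x})\mapsto(\lambda\mathbf{u},\lambda^{-1}\mathbf{x})$, the computation $M(\mathbf{u}\mathbf{x}^T)=(M\mathbf{u})\mathbf{x}^T$ showing that left multiplication only moves the column factor, and the fact that a proper nonzero left ideal must contain a rank-one element (else it would contain a unit). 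This is a net gain in rigor and elementarity; the price is losing the one-line geometric description. One small point to spell out: after finding a rank-one $\gamma\in L$ with row space $(x:y)$, you conclude ``so it has this form,'' which implicitly uses that $L$ cannot properly contain $A\gamma$. That needs the remark that any element of $L\setminus A\gamma$ is either invertible (giving $L=A$) or rank one with a different row space, in which case $A\gamma$ together with that element already generate $A$ — but the paper's ``from which the result follows'' skips the same step, so this is not a genuine gap, just a spot where a sentence would help.
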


\begin{proof}
The nonzero matrices 
of determinant zero, modulo $(\ZZ/N\ZZ)^*$, determine a hypersurface 
$ad = bc$, which is the image of $\PP^1 \times \PP^1$ by the 
Segre embedding in $\PP^3$ (= $(A\backslash\{0\})/(\ZZ/N\ZZ)^*$). 
It is easily verified that left and right multiplication induce
the standard and transpose multiplication on the first and second 
factors of $\PP^1 \times \PP^1$, respectively, under this isomorphism, 
from which the result follows.
\end{proof}

Using an explicit isomorphism $\cO/N\cO \cong \Mat_2(\ZZ/N\ZZ)$, 
by this lemma we can find $[\mu]$ in $(\cO/N\cO)^*$ such that 
$
(\cO\gamma/N\cO) [\mu] = I/N\cO,
$
using linear algebra over $\ZZ/N\ZZ$.

In Section~\ref{sec:ideals:ellpow:lift} we require an input $[\mu]$ 
which is a unit in $Rj/N\cO$. 
Observing that $[j]$ is a unit, we see that such units form 
a coset of $(R/NR)^*$:
$$
(\cO/N\cO)^* \cap Rj/N\cO = (R/NR)^*[j].
$$
We note that $(R/NR)^*$ acts on the $N+1$ proper nontrivial left 
$\cO$-ideals, with kernel $(\ZZ/N\ZZ)^*$.
By hypothesis, $R$ is a subring of small discriminant in which 
$N$ is not ramified.  
If $N$ is inert in $R$, then the $N+1$ ideals form one orbit. 
Otherwise, if $N$ is split, there is one orbit of size $N-1$ and 
two fixed points $\cO\pp_1/N\cO$ and $\cO\pp_2/N\cO$, where 
$\pp_1$ and $\pp_2$ are the prime ideals of $R$ over $N$.
With overwhelming probability, $I/N\cO$ and $\cO\gamma/N\cO$ will 
not be such fixed points, and so we can solve for $[\mu]$ in 
$(R/NR)^*[j]$. In the event of failure, we can select a new 
$\gamma$ or $N$.

\subsection{Approximating elements of $(R/NR)^*[j]$ by $\ell$-power norm representatives}
\label{sec:ideals:ellpow:lift}

In this section, we assume that $\ell$ is a quadratic non-residue modulo $N$.
Let also $\omega$ be a generator of $R$ of minimal norm, either $1$, $2$, 
or $(1+q)/4$, for $q$ a prime congruent to $3$ modulo~$4$.
We now motivate the restriction to elements of $(R/NR)^*[j]$ 
in the previous section.

We suppose that we are given as input a lift 
$\mu_0 = x_0 + y_0 \omega + (z_0 + w_0 \omega) j$
of an arbitrary element of $\cO/N\cO$ to $R + Rj$.  
The relaxed approximation problem is to search for $\lambda$ in $\ZZ$ 
and $\mu_1 = x_1 + y_1 \omega + (z_1 + w_1 \omega) j$ such that 
$\mu = \lambda\mu_0 + N\mu_1$ satisfies the norm equation 
$$
\Nr(\mu) = 
    f(\lambda x_0 + N x_1, \lambda y_0 + N y_1) + 
p\, f(\lambda z_0 + N z_1, \lambda w_0 + N w_1) = \ell^e,
$$
for some $e\in\mathbb{N}$, where $f(x,y) = \Nr(x + y\omega)$ 
is a principal binary quadratic form of discriminant $D$ as 
in Lemma~\ref{lem:special-p-extremal-properties}.
The key idea to solve this norm equation, as used in~\cite{Petit2008c} 
to cryptanalyze the other hash function of Charles-Goren-Lauter, 
is that it simplifies considerably when $x_0 = y_0 = 0$: 
\begin{equation}
\label{eq:simplified-normequation}
\Nr(\mu) = N^2 f(x_1, y_1) + p\, f(\lambda z_0 + N z_1, \lambda w_0 + N w_1) = \ell^e.
\end{equation}
The simple algorithm we now describe to solve this equation justifies 
the choice of $[\mu] \in (R/NR)^*[j]$ in Section~\ref{sec:ideals:ellpow:isom}.


To construct $\mu$, given $[\mu] \in (R/NR)^*[j]$, we consider a first 
lift $\mu_0 = (z_0 + w_0 \omega) j$ to $Rj$ as above, and find 
$\lambda$ in $\ZZ$ and $\mu_1  = (x_1 + y_1 \omega) + (z_1 + w_1 \omega) j$ 
in $R + Rj$ satisfying the simplified 
equation~\eqref{eq:simplified-normequation}.
This equation modulo $N$, gives
$
\lambda^2 p\,f(z_0,w_0) = \ell^e \bmod N,
$
and since $\ell$ is a quadratic nonresidue modulo $N$, 
we choose the parity of $e$ depending on whether $p\,f(z_0,w_0)$ 
is a quadratic residue modulo $N$ or not, and solve for a 
square root modulo $N$ to find $\lambda$, in $0 < \lambda < N$.

Now for fixed $z_0$, $w_0$, and $\lambda$, 
Equation~\eqref{eq:simplified-normequation} implies 
a linear equation in $z_1$ and $w_1$:
\begin{equation}
\label{eq:linear-normequation}
2\lambda p L((z_0,w_0),(z_1,w_1)) = \frac{\ell^e-\lambda^2 p f(z_0,w_0)}{N}\bmod N,
\end{equation}
where $L$ is the bilinear polynomial
$$
L((z_0,w_0),(z_1,w_1)) = \langle{z_0 + w_0\omega,z_1 + w_1\omega}\rangle
                   = 2 z_0z_1 + \Tr(\omega)(z_0w_1 + w_0z_1) + 2\Nr(\omega)w_0w_1.
$$
%
Since $N$ is a large prime, such that $\gcd(x_0w_0|D|p,N)=1$, 
there are exactly $N$ solutions $(z_1,w_1)$ to the linear 
equation~\eqref{eq:linear-normequation}. 
We choose a random solution satisfying 
$$
|\lambda z_0 + N z_1| < N^2 \text{ and } 
|\lambda w_0 + N w_1| < N^2,  
$$
and equation~\eqref{eq:simplified-normequation} now leads to a problem
of representation of an integer by a binary quadratic form:
\begin{equation}
\label{eq:cornaccia-normequation}
f(x_1,y_1) = r := \frac{\ell^e- p f(\lambda z_0 + N z_1, \lambda w_0 + N w_1)}{N^2}\cdot
\end{equation}
We assume that $e$ was chosen sufficiently large so that $r$ is positive.  
If $r$ (or $rq$), modulo a smooth square integer factor, is prime, splits 
and is a norm in $R$, Cornaccia's algorithm~\cite{Cornacchia1903} can 
efficiently solve this equation, or determine that no solution exists.  
In the latter case, we repeat with a new value of $(z_1,w_1)$.
Assuming the values of $r$ behave as random values around $N^4|D|p$, 
we expect to choose $\log(N^4|D|p)h(D)$ values before finding a solution.

In practice, we begin with $e$ the minimal possible value having 
the correct parity, then we progressively increase it if no 
solution has been found. For $N$ in the range $\tilde{O}(\sqrt{p})$,
we expect the size of $e$ to satisfy 
$e \sim \log_\ell(N^4|D|p) \sim 3\log_\ell(p)$.

\subsection{Algorithm analysis and experimental results}
\label{sec:ideals:ellpow:results}

We summarize our algorithm to compute an $\ell$-power norm 
representative of a left $\cO$-ideal, where $\cO$ is a special 
$p$-extremal maximal order.

\begin{Theorem}
Let $\cO$ be a maximal order in a quaternion algebra $B_{p,\infty}$ 
and let $\ell$ be a small prime. There exists a probabilistic algorithm, 
which takes as input a left $\cO$-ideal and outputs an isomorphic 
left $\cO$-ideal of $\ell$-power reduced norm. 
\end{Theorem}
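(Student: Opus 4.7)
The plan is to assemble the building blocks developed in Sections~\ref{sec:prel}--\ref{sec:ideals:ellpow}. Because the theorem is stated for an arbitrary maximal order, I would first invoke the reduction of Section~\ref{sec:ideals:gen} to replace the input by an equivalent problem for a special $p$-extremal maximal order constructed via Lemmas~\ref{lem:disc-4}--\ref{lem:disc-q}. From this point I assume $\cO$ is special $p$-extremal with distinguished subring $R$ and suborder $R + Rj$.

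Next, I would apply the prime norm algorithm of Section~\ref{sec:ideals:prime} to replace $I$ by an isomorphic left $\cO$-ideal of prime norm $N$, continuing to resample until $N$ is coprime to $\ell\,p\,|\disc(R)|$ and, moreover, $\ell$ is a quadratic non-residue modulo~$N$ (satisfied by roughly half of all primes, so only an expected constant number of resamplings is needed). Writing $I = \cO(N,\alpha)$, I would then call the integer representation algorithm of Section~\ref{sec:repns_in_orders}, taking $M = N\ell^{e_0}$ with $e_0$ just large enough for its hypothesis $M \ge p\,\Phi(M)$ to hold, and obtain $\gamma \in \cO$ of reduced norm $N\ell^{e_0}$.

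The heart of the argument is the matching step. Using the isomorphism $\cO/N\cO \cong \Mat_2(\ZZ/N\ZZ)$ together with the classification of proper one-sided ideals established in Section~\ref{sec:ideals:ellpow:isom}, I would solve by linear algebra over $\ZZ/N\ZZ$ for a class $[\mu] \in (R/NR)^*[j]$ satisfying $(\cO\gamma/N\cO)[\mu] = I/N\cO$; the discussion there shows this succeeds with overwhelming probability, the rare fixed-point degeneracy being handled by resampling $\gamma$ or $N$. The specialised strong approximation procedure of Section~\ref{sec:ideals:ellpow:lift} then lifts $[\mu]$ to $\mu \in \cO$ of reduced norm $\ell^{e_1}$, by fixing the parity of $e$ and the scalar $\lambda$ from the mod-$N$ reduction of~\eqref{eq:simplified-normequation}, sampling solutions of the linear constraint~\eqref{eq:linear-normequation} on $(z_1,w_1)$, and finally applying Cornacchia's algorithm to the binary-form equation~\eqref{eq:cornaccia-normequation}; the assumed non-residuosity of $\ell$ modulo $N$ guarantees solvability at the very first step.

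Setting $\beta = \gamma\mu$ yields an element of $I$ (since $N\cO \subseteq I$ and $[\beta] \in I/N\cO$) of reduced norm $N\ell^{e_0+e_1}$ with $\cO(N,\beta) = I$; Lemma~\ref{lem:ideal_norm_rep} then delivers the desired output $J = I\bar\beta/N$, an isomorphic left $\cO$-ideal of reduced norm $\ell^{e_0+e_1}$. Correctness is formally automatic from the construction, so the main obstacle is the analysis of probabilistic termination: each of the three sub-algorithms depends on a heuristic density statement (primes in short intervals for Section~\ref{sec:ideals:prime}, split principal primes in arithmetic progressions modulo $p$ for Section~\ref{sec:repns_in_orders}, and random-integer behaviour of the values $r$ of~\eqref{eq:cornaccia-normequation} for Section~\ref{sec:ideals:ellpow:lift}), and one must argue that the compounded failure probabilities at each stage leave an expected number of restarts that is polynomial in $\log p$, so that the overall algorithm runs in expected polynomial time.
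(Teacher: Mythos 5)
Your proposal is correct and matches the paper's implicit proof step by step: reduce via the Eichler-order ideal $I(\cO_1,\cO_2)$ of Section~\ref{sec:ideals:gen} to a special $p$-extremal order, normalize the input to prime norm $N$ with Lemma~\ref{lem:ideal_norm_rep} and the prime norm algorithm, construct $\gamma$ of norm $N\ell^{e_0}$ via Section~\ref{sec:repns_in_orders}, match ideals mod $N$ by linear algebra in $(R/NR)^*[j]$, lift to $\mu$ of norm $\ell^{e_1}$ by the specialized strong approximation of Section~\ref{sec:ideals:ellpow:lift}, and conclude with $J = I\bar\beta/N$ for $\beta = \gamma\mu$. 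You also correctly flag the places where the algorithm is only heuristically polynomial-time and the resampling needed to enforce the technical conditions (e.g.~$\ell$ a non-residue mod $N$, $I/N\cO$ not a fixed point of $(R/NR)^*$), which is exactly the role of those conditions in the paper.
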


Under the most optimistic heuristic assumptions on randomness of 
representations of integers by quadratic forms and uniform 
distributions of primes, this algorithm is expected to run in 
polynomial time and to produce ideals of norm $\ell^{e}$, where
$$
e \sim \log_\ell(N p\,\Phi(p) |D|) + \log_\ell(N^4 |D| p) - \log_\ell N^2,
$$
where the three terms respectively account for the norms 
of $\gamma$, $\mu$ and $N^{-1}$.
Assuming that $\log_\ell(N) \sim \frac{1}{2}\log_\ell(p)$ 
and that in practice $\Phi(p)\sim\log(p)^n$ suffices, 
this leads to 
$$
e \sim \frac{7}{2}\log_\ell(p).
$$

We implemented the algorithms of this article in Magma~\cite{Magma}. 
We first tested the algorithm of Section~\ref{sec:repns_in_orders} to compute 
$N$ times $\ell$-power norm elements in $\cO$ with $\ell\in\{2,3\}$, for random 
primes $p$ of sizes up to 200 bits and  for $N$ values obtained after applying 
the algorithm of Section~\ref{sec:ideals:prime} on an ideal generated via a 
random walk from $\cO$. The norm of the outputs were close to the expected 
values.

We then tested the algorithm of Section~\ref{sec:ideals:ellpow:lift}
for $\ell\in\{2,3\}$, for random $p$ values of sizes up to 200 bits, 
for $N$ values obtained after applying the algorithm of 
Section~\ref{sec:ideals:prime} on an ideal generated via a random walk 
from $\cO$, and for $\mu_0 = (z_0+w_0\omega)j$ with randomly chosen 
$z_0,w_0\in\ZZ/N\ZZ$ not both equal to zero.
The exponents of the norms of the quaternions computed were 
close to the expected value $3\log_\ell p$.

We finally tested the overall algorithm of Section~\ref{sec:ideals:ellpow} 
for $\ell\in\{2,3\}$, for random $p$ values of sizes up to 200 bits, and for 
ideals $I$ generated via a random walk from $\cO$.
The $\ell$-valuation of the norm of the ideals computed were close to the 
expected value $\frac{7}{2}\log_\ell p$.

All computations were carried out on an Intel Xeon CPU X5500 processor 
with 24 GB RAM running at 2.67GHz. The algorithm of 
Section~\ref{sec:ideals:ellpow:lift} succeeded in less than 100 seconds 
for all 200 bit primes, and the overall algorithm of 
Section~\ref{sec:ideals:ellpow} terminated in less than 250 seconds 
for primes in this range. Additional experimental results are provided 
in Appendix~\ref{sec:expres}.

\subsection{Generalization to arbitrary orders}
\label{sec:ideals:gen}

We now describe how to remove the condition that $\cO$ is one of 
the special orders defined in Section~\ref{sec:intro:QA}. 
First we encode the relation between two maximal orders embedded 
in $\QA$ in terms of an associated ideal.

\begin{Lemma}
\label{lem:eichler-order-ideals}
Suppose that $\cO_1$ and $\cO_2$ are given maximal orders 
in $\QA$.  Then the Eichler order $\cO_1 \cap \cO_2$ has the same 
index in each of $\cO_1$ and $\cO_2$, which we denote $M$, and the 
set:
$$
I(\cO_1,\cO_2) = \{ \alpha \in \QA \;|\; \alpha \cO_2 \bar{\alpha} \subseteq M\cO_1 \}
$$
is a left $\cO_1$-ideal and right $\cO_2$-ideal of reduced norm $M$. 
Conversely, if $I$ is a left $\cO_1$-ideal with right order $\cO_2$, 
such that $I \not\subseteq n\cO_1$ for any $n > 1$, then $I = I(\cO_1,\cO_2)$.
\end{Lemma}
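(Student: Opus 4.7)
First, the equal-index assertion follows from discriminants. Both $\cO_1$ and $\cO_2$ are maximal orders in $B_{p,\infty}$, and hence have discriminant $p^2$ as quadratic $\ZZ$-lattices under the reduced norm form. Applying the identity $\disc(L) = [\Lambda : L]^2 \disc(\Lambda)$ to the sublattice $L = \cO_1 \cap \cO_2$ inside each of $\cO_1$ and $\cO_2$ yields equal indices, with common value $M$. Now set $I := I(\cO_1, \cO_2)$. The bimodule structure is checked by direct substitution: any $\ZZ$-order is closed under the canonical involution, since $\bar\beta = \Tr(\beta) - \beta$ and the reduced trace is integral on orders; hence for $\beta \in \cO_1$ and $\alpha \in I$, $(\beta\alpha)\cO_2\overline{(\beta\alpha)} = \beta(\alpha\cO_2\bar\alpha)\bar\beta \subseteq \beta(M\cO_1)\bar\beta \subseteq M\cO_1$, using $\beta\cO_1\bar\beta \subseteq \cO_1$, and the right $\cO_2$-module property is analogous, using $\gamma\cO_2\bar\gamma \subseteq \cO_2$ for $\gamma \in \cO_2$. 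The inclusion $\cO_2 \subseteq \tfrac{1}{M}(\cO_1 \cap \cO_2) \subseteq \tfrac{1}{M}\cO_1$ then yields $M\cO_1 \subseteq I$, so $I$ contains a full-rank sublattice.

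To show that $I$ is itself a lattice and to compute $\Nr(I) = M$ along with the right order, I would pass to completions. At each finite prime $v$ the two maximal orders $\cO_{1,v}, \cO_{2,v}$ sit at some distance $d_v$ in the Bruhat--Tits tree of $B_v$ (trivially at $v = p$, where $B_v$ is a division algebra and $d_p = 0$), with $M_v = q_v^{d_v}$ the local index and $M = \prod_v M_v$. Choosing coordinates so that $\cO_{1,v} = \Mat_2(\ZZ_v)$ and $\cO_{2,v}$ is conjugate by $\mathrm{diag}(\pi_v^{d_v}, 1)$, a direct computation identifies $I_v$ with the principal left $\cO_{1,v}$-ideal generated by $\mathrm{diag}(\pi_v^{d_v}, 1)$, which is the standard connecting ideal of reduced norm $M_v$ with right order $\cO_{2,v}$, and in particular is a lattice. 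Intersecting over all $v$ gives that $I$ is a lattice with $\Nr(I) = M$ and right order $\cO_2$.

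For the converse, let $I'$ be a primitive left $\cO_1$-ideal with right order $\cO_2$. For any $\alpha \in I'$, $\alpha\cO_2 \subseteq I'$ yields $\alpha\cO_2\bar\alpha \subseteq I'\overline{I'} = \Nr(I')\cO_1$. Locally, the constraints that $I'_v$ is principal over $\cO_{1,v}$, has right order $\cO_{2,v}$, and is primitive (i.e.\ not contained in $\pi_v \cO_{1,v}$) determine $I'_v$ uniquely as the connecting ideal above, so $\Nr(I'_v) = M_v$ and hence $\Nr(I') = M$; this yields $I' \subseteq I(\cO_1, \cO_2)$, with equality following from matching reduced norms.

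The main obstacle is the local identification of $I_v$ with the explicit connecting ideal in the second paragraph: showing that the naively defined set $\{\alpha : \alpha\cO_{2,v}\bar\alpha \subseteq M_v\cO_{1,v}\}$ is in fact a (principal) lattice, not merely a quadratic variety cut out by its defining condition. I would attack this by writing $\alpha$ in Bruhat decomposition $\alpha = k_1\,\mathrm{diag}(\pi_v^{a}, \pi_v^{b})\,k_2$ with $k_1, k_2 \in \cO_{1,v}^{\times}$ and tracking the valuation inequalities on $(a,b)$ imposed by $\alpha X \bar\alpha \in M_v\cO_{1,v}$ as $X$ runs over a basis of $\cO_{2,v}$; the resulting inequalities should cut out exactly the intended principal ideal, after which the rest of the argument is straightforward bookkeeping.
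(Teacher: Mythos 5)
Your approach is essentially the paper's: equal index via norm-form discriminants, followed by a local-global reduction where the ideal is identified locally with the principal connecting ideal, and the converse handled by the same local normal form. Your explicit bimodule verification and the direct observation in the converse that $\alpha\cO_2\bar\alpha \subseteq I'\overline{I'} = \Nr(I')\cO_1$ for $\alpha \in I'$ are welcome details the paper leaves implicit. You correctly identify the genuine gap in your own write-up: you never actually prove that the locally defined set $I_v = \{\alpha : \alpha\cO_{2,v}\bar\alpha \subseteq M_v\cO_{1,v}\}$ is a lattice equal to the principal connecting ideal, and only sketch a plan via Bruhat decomposition. The paper treats this step tersely as well, but does assert the equality. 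A cleaner way to close the gap than tracking valuation inequalities through Bruhat coordinates is the following. Choose $\alpha_v \in \cO_{1,v}$ with $\alpha_v^{-1}\cO_{1,v}\alpha_v = \cO_{2,v}$ and, after clearing central integer factors, $\Nr(\alpha_v)\ZZ_v = M_v\ZZ_v$. Then $\alpha_v\cO_{2,v}\bar\alpha_v = \alpha_v\cO_{2,v}\alpha_v^{-1}\cdot\alpha_v\bar\alpha_v = \Nr(\alpha_v)\cO_{1,v} = M_v\cO_{1,v}$, which immediately gives $\cO_{1,v}\alpha_v \subseteq I_v$. For the reverse inclusion, take $\beta \in I_v$ and set $\gamma = \alpha_v^{-1}\beta$; then $\gamma\cO_{2,v}\bar\gamma = \alpha_v^{-1}(\beta\cO_{2,v}\bar\beta)\bar\alpha_v^{-1} \subseteq \alpha_v^{-1}(M_v\cO_{1,v})\bar\alpha_v^{-1} = \cO_{2,v}$. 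One then checks directly that $\gamma\cO\bar\gamma \subseteq \cO$ forces $\gamma \in \cO$ for a local maximal order $\cO$: at $v = p$ the unique maximal order is the valuation ring so taking the middle element $1$ already gives $\Nr(\gamma) \in \ZZ_p$, and at split $v$ one conjugates $\cO$ to $\Mat_2(\ZZ_v)$ and sandwiches by the matrix units $E_{ij}$ to bound all four entry valuations. This yields $I_v = \cO_{1,v}\alpha_v$ as a principal lattice of reduced norm $M_v$ with right order $\cO_{2,v}$ in one stroke, after which both directions of the lemma follow exactly as you argue.
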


\begin{proof}
The determinant of the norm form of any maximal order $\cO$ is $p^2$, 
and for any sub-lattice $L \subset \cO$ of index $M$, the reduced norm 
form on $L$ has determinant $M^2\det(\cO)$. This establishes the 
well-known result that the index of an Eichler order in any maximal 
order is an invariant, called its level.

It is clear by construction that $I(\cO_1,\cO_2)$ is a left 
$\cO_1$-module and a right $\cO_2$-module. 
Locally at any prime $q$, we may assume $\cO_1$ and $\cO_2$ 
are $\ZZ_q$-orders such that $\cO_2 = \alpha^{-1} \cO_1 \alpha$, 
for some $\alpha$ in $\cO_1$ hence also in $\cO_2$.  
It follows that we have an inclusion $\alpha \cO_2 = \cO_1 \alpha 
\subseteq I(\cO_1,\cO_2)$. 
However, removing any integer factors (in the center), the reduced 
norm of a minimal $\alpha$ must equal the level $M\ZZ_q$, which 
implies equality. The global result follows from the local-global 
principle. 

Conversely, since any left $\cO_1$-ideal $I$ is locally principal 
at each prime $q$, 
one can find locally $\alpha$ such that $I = \cO_1\alpha$; the right order of I is then
$\cO_2 = \alpha^{-1} \cO_1 \alpha$.
By hypothesis $\alpha$ is not divisible by any integer and we 
conclude that the Eichler order has level $\Nr(\alpha) = \Nr(I) 
= M\ZZ_q$. From the above construction in terms of a local 
generator, we conclude $I = I(\cO_1,\cO_2)$.
\end{proof}

\begin{Theorem}
Let $\cO_1$ and $\cO_2$ be maximal orders in a quaternion 
algebra $B_{p,\infty}$ and let $\ell$ be a small prime. 
Given an algorithm which takes as input a left $\cO_1$-ideal 
and outputs an equivalent left $\cO_1$-ideal of $\ell$-power 
reduced norm, there exists an algorithm with the same 
complexity, up to a constant of size polynomial in the input 
size of $\cO_1$ and $\cO_2$, which takes as input a left 
$\cO_2$-ideal and outputs an equivalent left $\cO_2$-ideal of $\ell$-power reduced norm. 
\end{Theorem}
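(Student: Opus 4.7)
The plan is to reduce the $\cO_2$-problem to the $\cO_1$-problem using the connecting ideal $K := I(\cO_1,\cO_2)$ provided by Lemma~\ref{lem:eichler-order-ideals}, which is a left $\cO_1$- and right $\cO_2$-ideal of reduced norm $M$. A direct transport --- form $KI_2$, apply the $\cO_1$-algorithm to obtain $(KI_2)\gamma$ of norm $\ell^n$, and then compute $\bar K(KI_2)\gamma/M = I_2\gamma$ --- fails in general because the resulting norm $\ell^n/M$ is not an $\ell$-power whenever $M$ has a prime factor different from $\ell$. My strategy is to eliminate the non-$\ell$ part of $M$ by first running the $\cO_1$-algorithm on $K$ itself.

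Begin by computing $K$ via Lemma~\ref{lem:eichler-order-ideals}, using only polynomial-time lattice arithmetic in $\cO_1$ and $\cO_2$. Invoke the $\cO_1$-algorithm on $K$ to produce an equivalent left $\cO_1$-ideal $K\alpha$ of $\ell$-power reduced norm $\ell^{k_0}$, where $\alpha \in B^*$ with $\Nr(\alpha) = \ell^{k_0}/M$. Because $\Nr(K\alpha)$ is a power of $\ell$, the local norm of $(K\alpha)_q$ equals $1$ at every prime $q \neq \ell$, which forces $(K\alpha)_q = \cO_{1,q}$ and hence $\cO_{2,q}'' = \cO_{1,q}$, where $\cO_2'' := \alpha^{-1}\cO_2\alpha$ is the right order of $K\alpha$. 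Thus $K\alpha$ serves as a new connecting ideal from $\cO_1$ to $\cO_2''$ whose norm is an $\ell$-power and whose deviation from $\cO_1$ is concentrated entirely at $\ell$.

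Given a left $\cO_2$-ideal $I_2$, form the conjugate $I_2'' := \alpha^{-1}I_2\alpha$, a left $\cO_2''$-ideal of norm $\Nr(I_2)$ in the canonically corresponding class. Apply the $\cO_1$-algorithm to $K\alpha \cdot I_2'' = KI_2\alpha$, yielding $L = (KI_2\alpha)\delta$ of $\ell$-power reduced norm $\ell^n$. The transport $J'' := (K\alpha)^{-1}L = \bar\alpha\bar KL/\ell^{k_0}$ simplifies to $I_2''\delta$, a (possibly fractional) left $\cO_2''$-ideal of reduced norm $\ell^{n-k_0}$. The key local observation is that at every prime $q \neq \ell$, using $\cO_{1,q} = \cO_{2,q}''$, one has $J''_q = L_q \subseteq \cO_{1,q} = \cO_{2,q}''$, so $J''$ is automatically integral away from $\ell$. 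Consequently, the smallest integer $a \geq 0$ with $\ell^aJ'' \subseteq \cO_2''$ exists and is determined by purely local data at $\ell$; the scaled ideal $\ell^aJ''$ is an integral left $\cO_2''$-ideal of $\ell$-power norm $\ell^{2a+n-k_0}$.

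Finally, conjugate back: $J_2 := \alpha(\ell^aJ'')\alpha^{-1} = \ell^aI_2(\alpha\delta\alpha^{-1}) \subseteq \alpha\cO_2''\alpha^{-1} = \cO_2$ is a left $\cO_2$-ideal with $\ell$-power reduced norm $\ell^{2a+n-k_0}$, equivalent to $I_2$ via right multiplication by $\ell^a\alpha\delta\alpha^{-1} \in B^*$. The algorithm uses exactly two invocations of the $\cO_1$-algorithm plus polynomial-time lattice and element arithmetic (for $K$, for $(K\alpha)^{-1}$, for the local denominator $a$ at $\ell$, and for the conjugation), which matches the complexity bound in the statement. The main obstacle in the argument is the local structural claim $\cO_{2,q}'' = \cO_{1,q}$ for $q \neq \ell$; everything else is bookkeeping, but this coincidence is precisely what ensures that any denominator of $J''$ is concentrated at $\ell$ and can be cleared by scaling without leaving the $\ell$-power norm class.
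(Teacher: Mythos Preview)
Your proof is correct and follows the same high-level plan as the paper: construct the connecting ideal $K=I(\cO_1,\cO_2)$, call the $\cO_1$-algorithm once on $K$ and once on $KI_2$, and combine the two outputs into an $\ell$-power norm representative of $I_2$. The difference lies entirely in the combination step. You pass to the conjugate order $\cO_2''=\alpha^{-1}\cO_2\alpha$, use the local identity $\cO_{2,q}''=\cO_{1,q}$ for $q\neq\ell$ to confine all denominators of $J''=(K\alpha)^{-1}L$ to the prime $\ell$, and then clear them by an $\ell$-power scaling before conjugating back. The paper avoids all of this: writing the two outputs as $I\bar\gamma_1/\Nr(I)$ and $IJ\bar\gamma_2/\Nr(IJ)$ with $\gamma_1\in I$, $\gamma_2\in IJ$, it observes that $\gamma=\bar\gamma_1\gamma_2/\Nr(I)$ lies in $\bar I\cdot IJ/\Nr(I)=J$ with $\Nr(\gamma)=\Nr(J)\ell^{e_1+e_2}$, so Lemma~\ref{lem:ideal_norm_rep} immediately yields the integral $\cO_2$-ideal $J\bar\gamma/\Nr(J)$ of norm $\ell^{e_1+e_2}$, with no conjugation, no local analysis, and no scaling. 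Your route buys nothing extra here; the paper's element-level formula is strictly simpler and gives a sharper exponent (your final exponent carries the extra $2a$).
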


\begin{proof}
Assume we are given two orders $\cO_1$, $\cO_2$ and a left 
$\cO_2$-ideal $J$, and set $I = I(\cO_1,\cO_2)$ as in 
Lemma~\ref{lem:eichler-order-ideals}. 
The ideal $I$ may be of arbitrarily large norm, but is bounded 
by something polynomial in the specification of $\cO_1$ and 
$\cO_2$ in terms of a basis for $\QA$. 

Supposing that we have an algorithm for $\cO_1$, we find 
representative left $\cO_1$-ideals for $I$ and $IJ$ such that  
$I_1 = I\bar{\gamma}_1/\Nr(I)$ with $\gamma_1$ in $I$, and 
$I_2 = IJ\bar{\gamma}_2/\Nr(IJ)$ with $\gamma_2$ in $IJ$, where 
$$
\Nr(\gamma_1) = \Nr(I) \ell^{e_1} \mbox{ and }
\Nr(\gamma_2) = \Nr(IJ) \ell^{e_2}.
$$
It follows that $\gamma = \bar{\gamma}_1 \gamma_2/\Nr(I)$
is an element of $J$ with reduced norm $\Nr(\gamma) = 
\Nr(J) \ell^{e_1+e_2}$, and hence $J\bar{\gamma}/\Nr(J)$ 
is of reduced norm $\ell^{e_1+e_2}$.
\end{proof}

This provides a reduction of the general case to the case of
special $p$-extremal orders, at the cost of two applications 
of the algorithm of Section~\ref{sec:ideals:ellpow}, and 
a larger power of $\ell$. 

\subsection{Generalization to powersmooth norms}
\label{sec:ideals:psmooth}

We recall that a number $s=\prod\ell_i^{e_i}$ is $S$-powersmooth if $\ell_i^{e_i}<S$. 
Our algorithms can be easily modified to construct ideal representatives of powersmooth 
norms.  
Using the approximations as before, the norm should be of size close to $p^{7/2}$. 
Since the product of all maximal powers of a prime lower than $S$ can be approximated 
by $S^{S/\log S}$, an adaptation of our algorithms will allow us to compute 
$S$-powersmooth representatives of left ideal classes of $\cO$, with 
$S\approx\frac{7}{2}\log p$.

\section{Conclusion and future work}
\label{sec:concl}

In this paper, we provided a probabilistic algorithm to solve a
quaternion ideal analog of the path problem in supersingular 
$\ell$-isogeny graphs. The algorithm runs in expected polynomial 
time subject to heuristics on expected distributions of primes,
and it is efficient in practice.

Following Deuring~\cite{Deuring1941}, there is a one-to-one correspondence 
between supersingular elliptic curves modulo $p$, up to Galois conjugacy, 
and isomorphism classes of maximal orders in the quaternion algebra $\QA$. 
By identifying isogeny kernels with powersmooth ideals in the quaternion 
algebra graphs, we expect our techniques to lead to both partial attacks 
on Charles-Goren-Lauter's isogeny based hash function (when the initial 
curve has extremal endomorphism ring), and to security reductions to the 
problem of computing the endomorphism ring of a supersingular elliptic curve. 
Similarly, we expect our results to lead to a constructive version of 
Deuring's correspondence from maximal orders in $\QA$ to their corresponding 
elements in the category of supersingular elliptic curves.

\paragraph{Acknowledgements} 
The research leading to these results has received funding from the Fonds National de la Recherche - FNRS and from the European Research Council through the European ISEC action HOME/2010/ISEC/AG/INT-011 B-CCENTRE project.

\appendix

\section{Experimental results}
\label{sec:expres}

In our experiments, the value of $m$ and the function $\Phi$ 
appearing in the specification of our algorithms were fixed 
to a priori minimal values based on probabilistic arguments 
on the distribution of primes, then increased when needed.

\subsection{Prime norm ideals}
\label{sec:primeres}

We show experimental results on the prime norm algorithm of 
Section~\ref{sec:ideals:prime} in Figure~\ref{fig:primeres}. 
The norms of the ideals constructed seem to be slightly 
larger than $p^{1/2}$ and the computation time cubic in 
$\log(p)$. 

\begin{figure}
\begin{center}
\includegraphics[clip=true,viewport=4cm 9cm 19cm 20.5cm,scale=0.4]{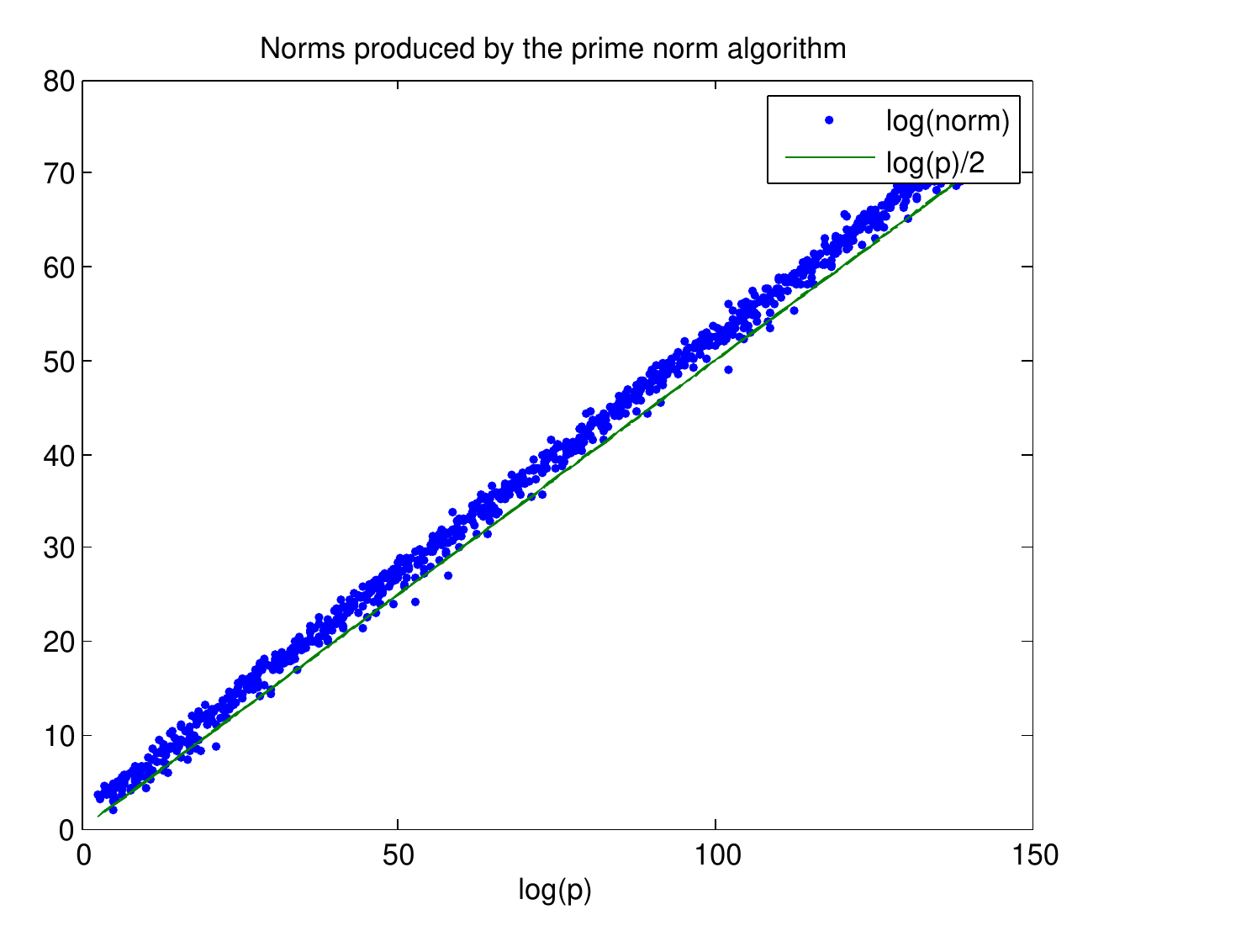}
\includegraphics[clip=true,viewport=3.8cm 9cm 19cm 20.5cm,scale=0.4]{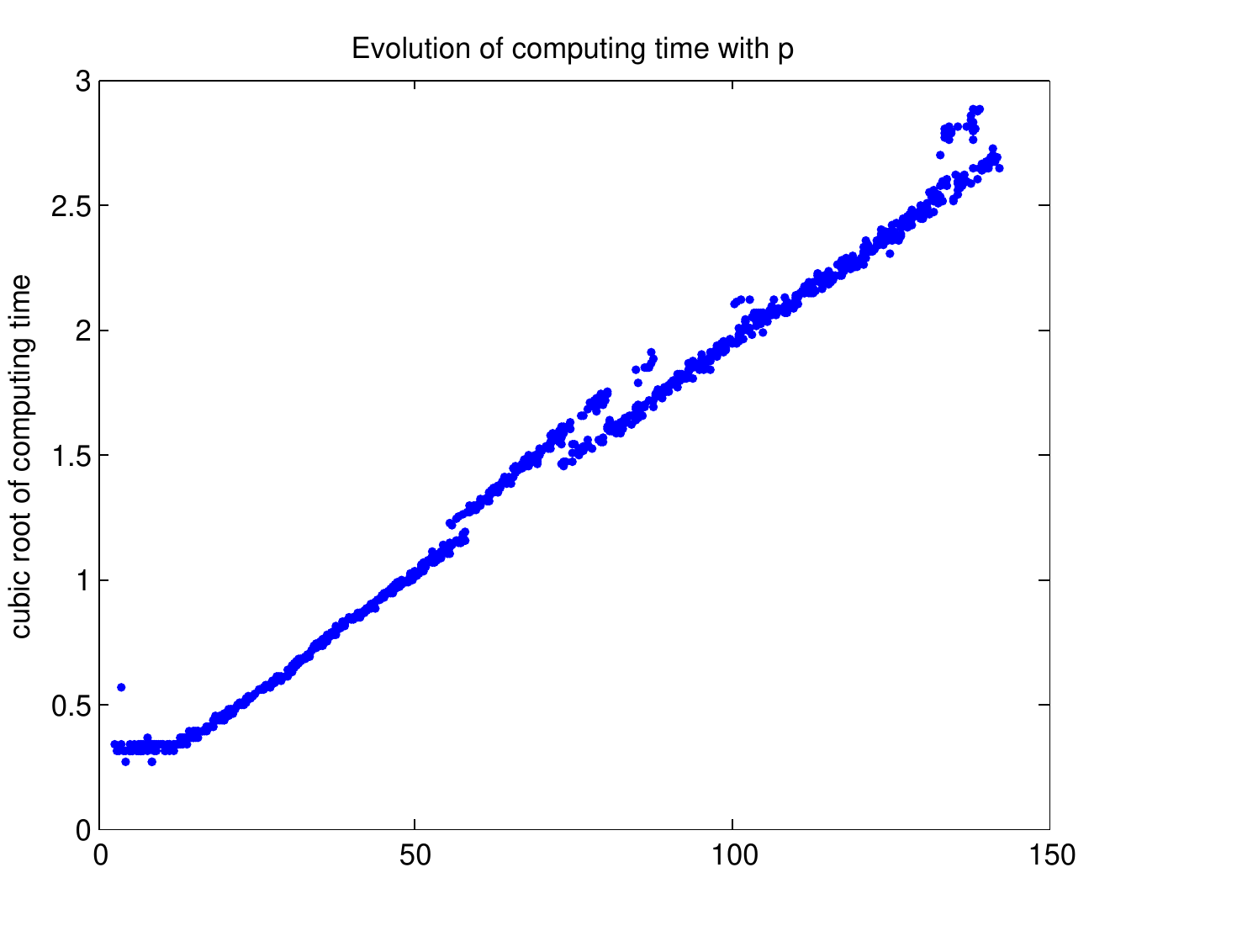}
\end{center}
\caption{Experimental results for the algorithm of Section~\ref{sec:ideals:prime} (with $m$ of the expected size): logarithm of the output norm $q_I(\alpha)$ and cubic root of running time with respect to $\log p$.
\label{fig:primeres}}
\end{figure}

\subsection{Quaternion elements with particular norms}
\label{sec:resNorm}

Experimental results on the algorithm of Section~\ref{sec:repns_in_orders} are shown in Figures~\ref{fig:Nellpowel} and~\ref{fig:ellpowel}, respectively for computing elements of norms $\ell^e$ or $N\ell^e$, for some~$e$. 
The results show the difference between the minimal exponent $e$ needed and a prediction based on probabilitic arguments.
All computations took less than one second.

\begin{figure}
\begin{center}
\includegraphics[clip=true,viewport=4cm 9cm 19cm 20.5cm,scale=0.4]{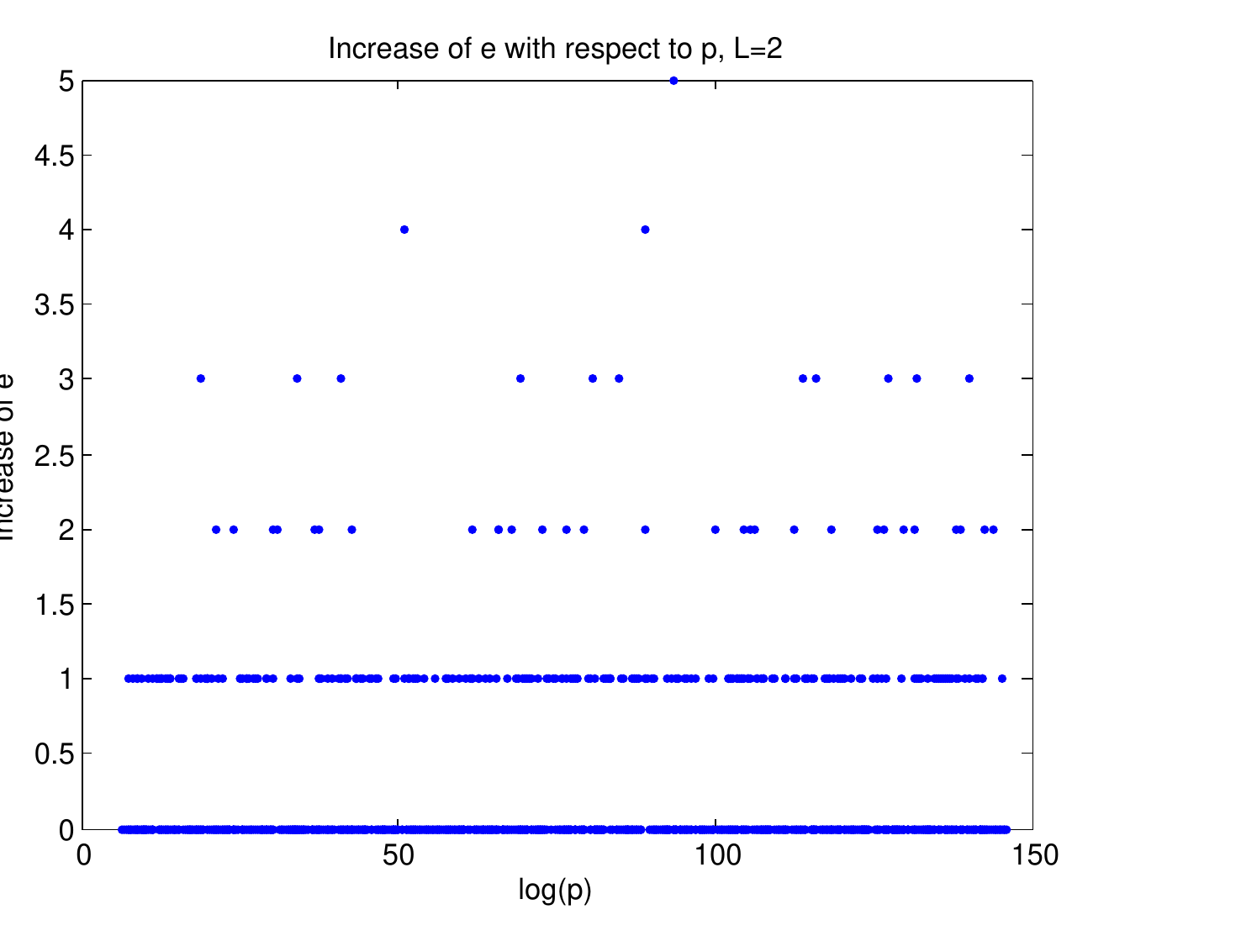}
\includegraphics[clip=true,viewport=4cm 9cm 19cm 20.5cm,scale=0.4]{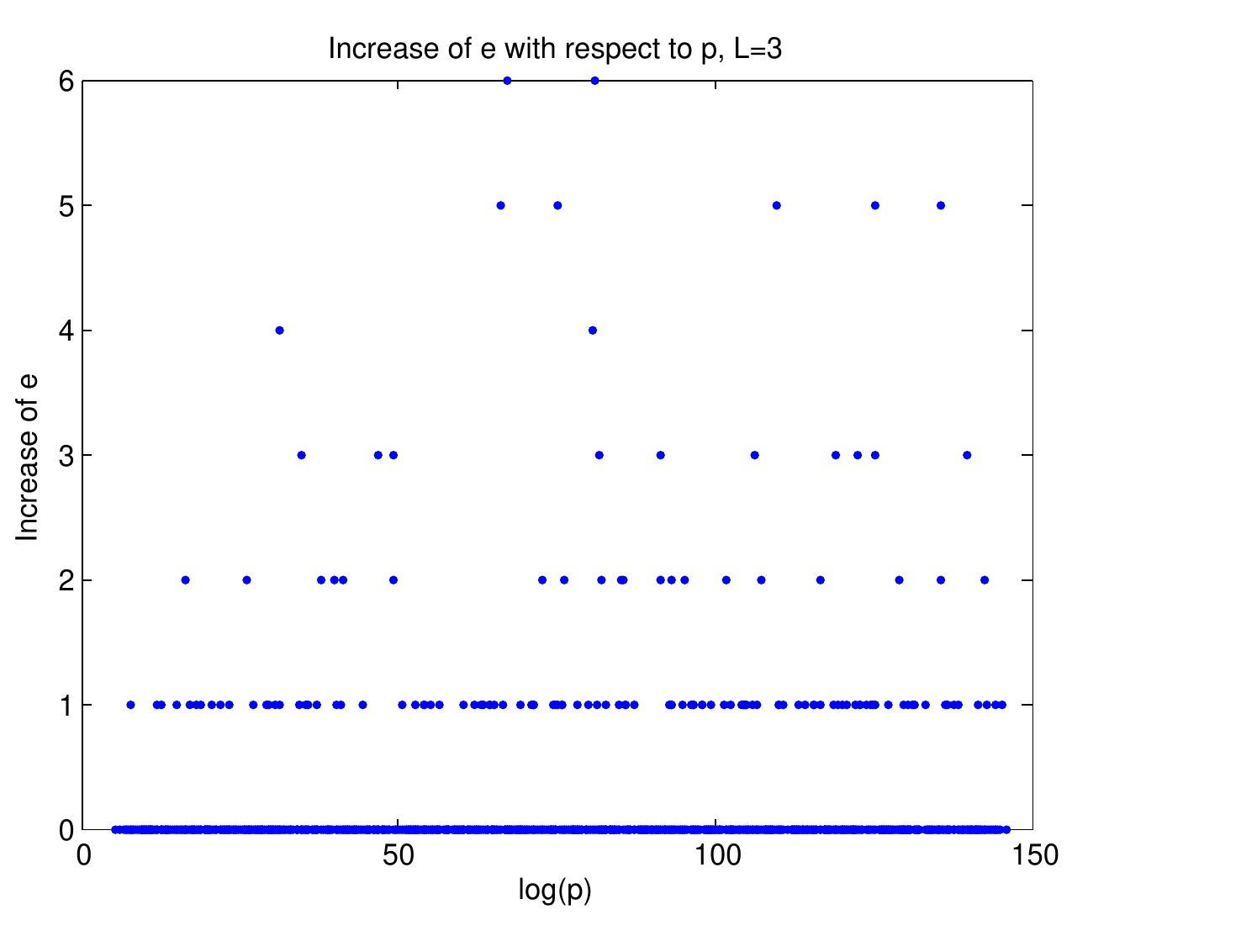}
\end{center}
\caption{
Experimental results for computing elements of norms $N\ell^e$ with the algorithm of Section~\ref{sec:repns_in_orders}, for various $p$ values with $\ell=2$ (left) and $\ell=3$ (right):  Difference between the minimal exponent $e$ needed and a prediction based on probabilistic arguments.
\label{fig:Nellpowel}}
\end{figure}

\begin{figure}
\begin{center}
\includegraphics[clip=true,viewport=4cm 9cm 19cm 20.5cm,scale=0.4]{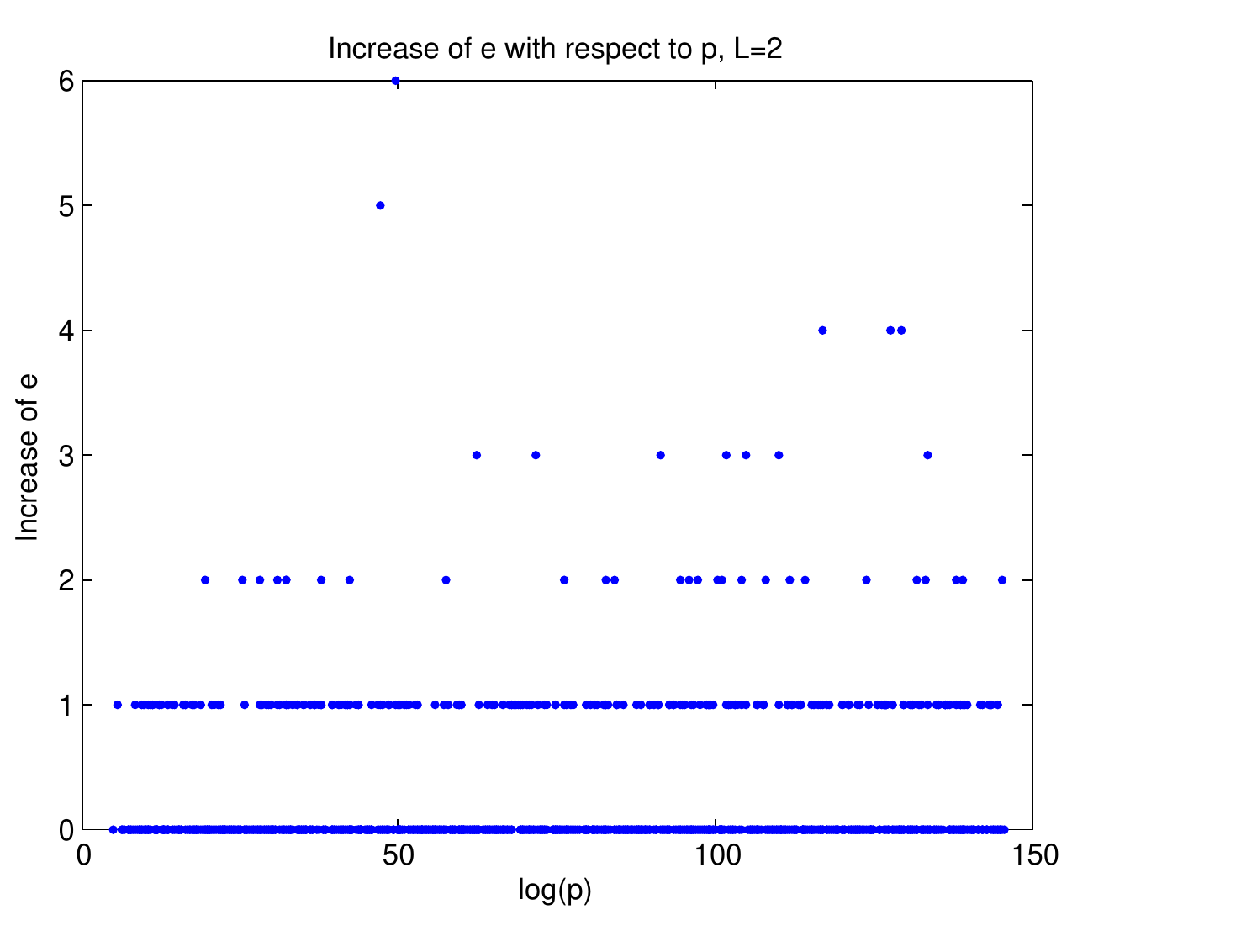}
\includegraphics[clip=true,viewport=4cm 9cm 19cm 20.5cm,scale=0.4]{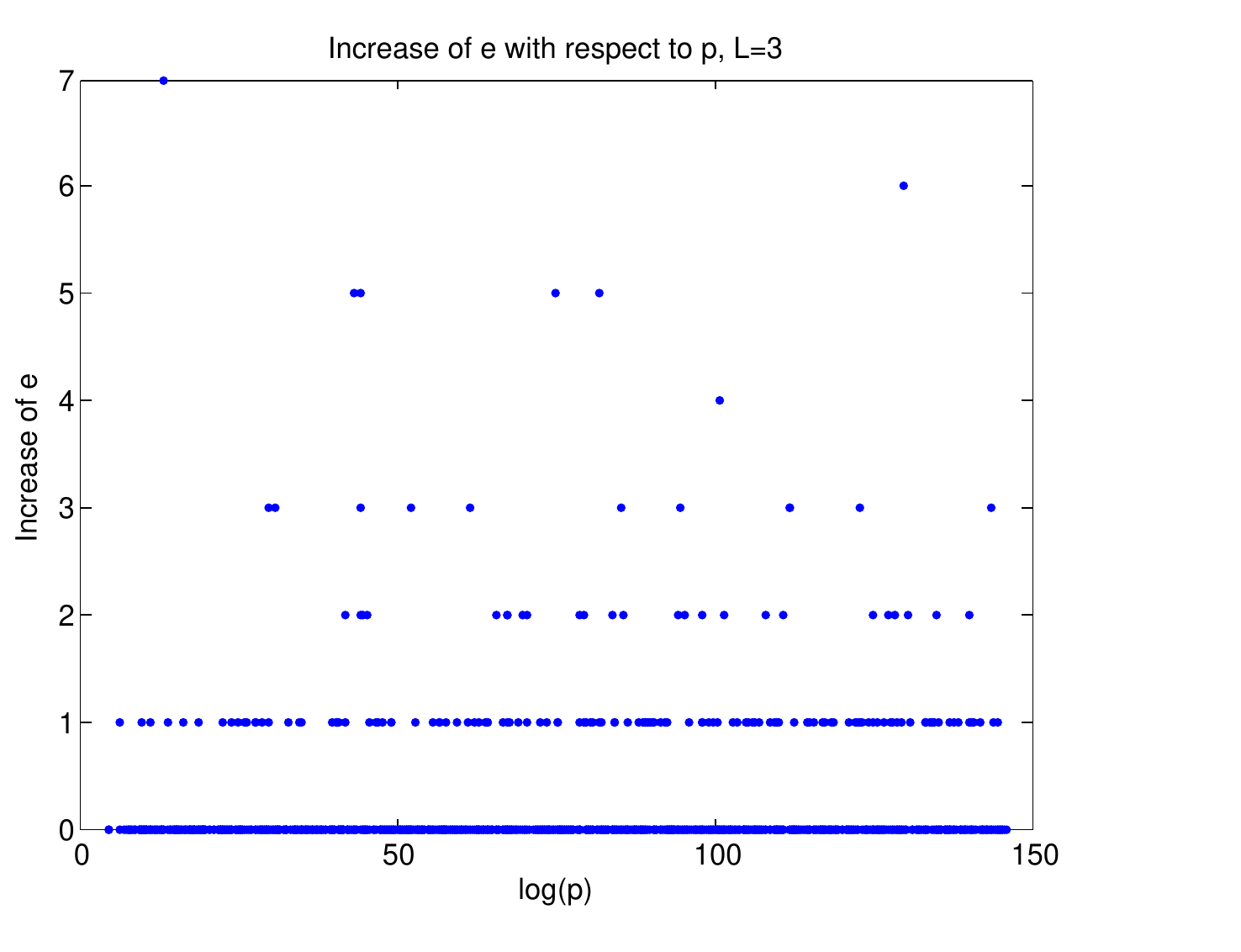}
\includegraphics[clip=true,viewport=4cm 9cm 19cm 20.5cm,scale=0.4]{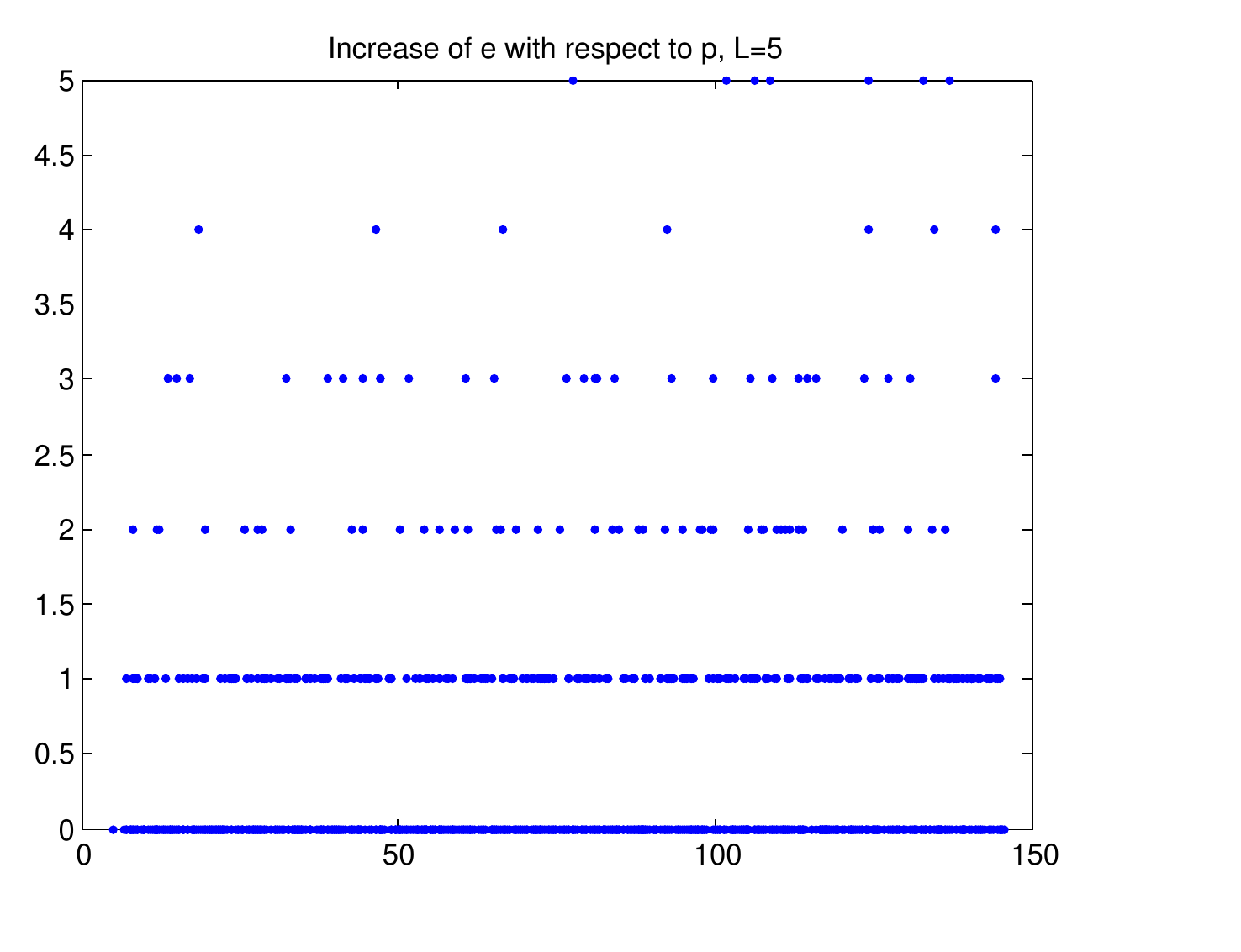}
\includegraphics[clip=true,viewport=4cm 9cm 19cm 20.5cm,scale=0.4]{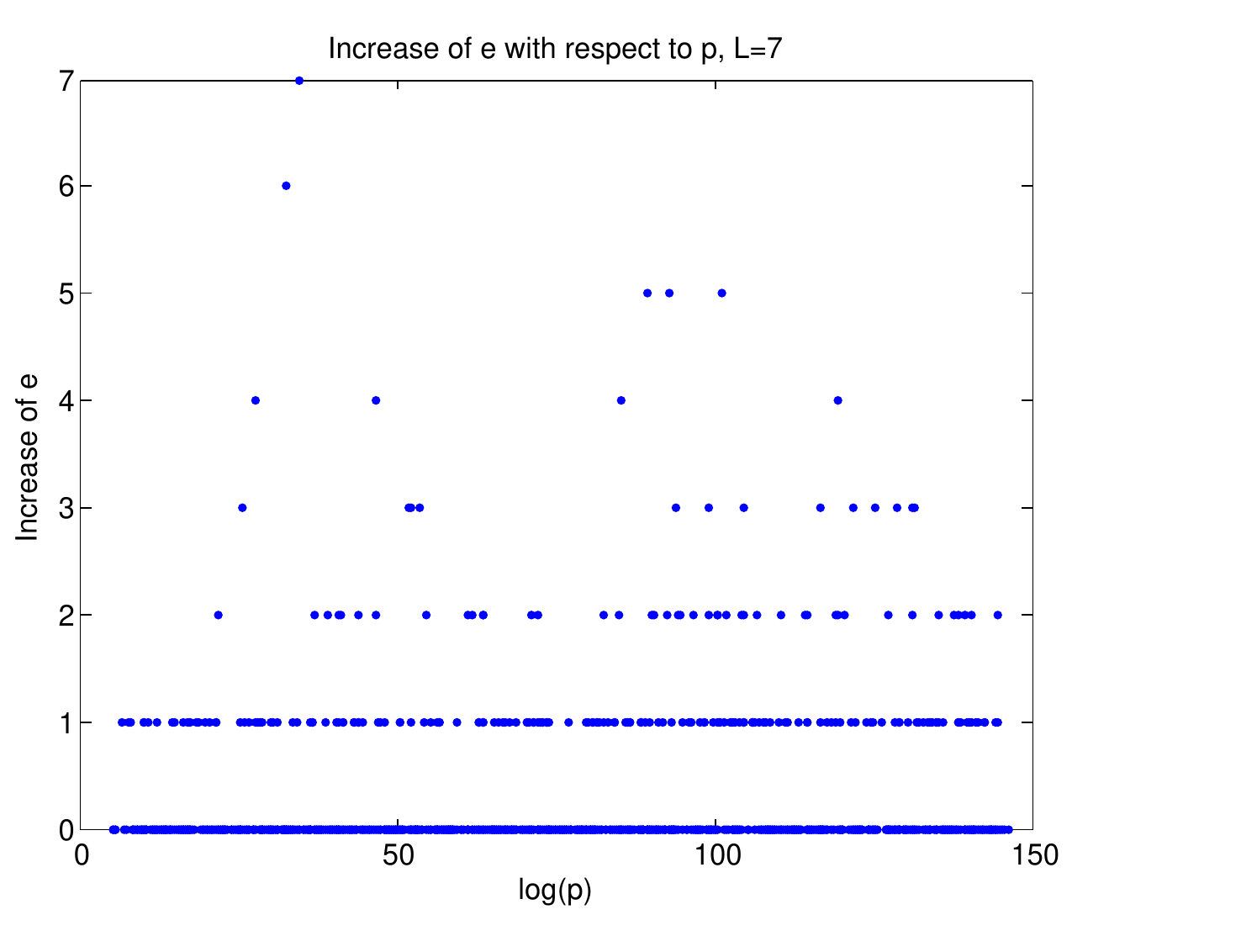}
\end{center}
\caption{
Experimental results for computing elements of norms $\ell^e$ with the algorithm of Section~\ref{sec:repns_in_orders}, for $\ell\in\{2,3,5,7\}$ and various $p$ value:  Difference between the minimal exponent $e$ needed and a prediction based on probabilistic arguments.
\label{fig:ellpowel}}
\end{figure}

\subsection{Ideals with $\ell$-power norms}

Experimental results on the algorithms of Section~\ref{sec:ideals:ellpow} are shown in Figures~\ref{fig:liftSize}, \ref{fig:liftTime}, ~\ref{fig:ellpowSize}, \ref{fig:ellpowTime}.

\begin{figure}
\begin{center}
\includegraphics[clip=true,viewport=3.8cm 9cm 19cm 20.5cm,scale=0.4]{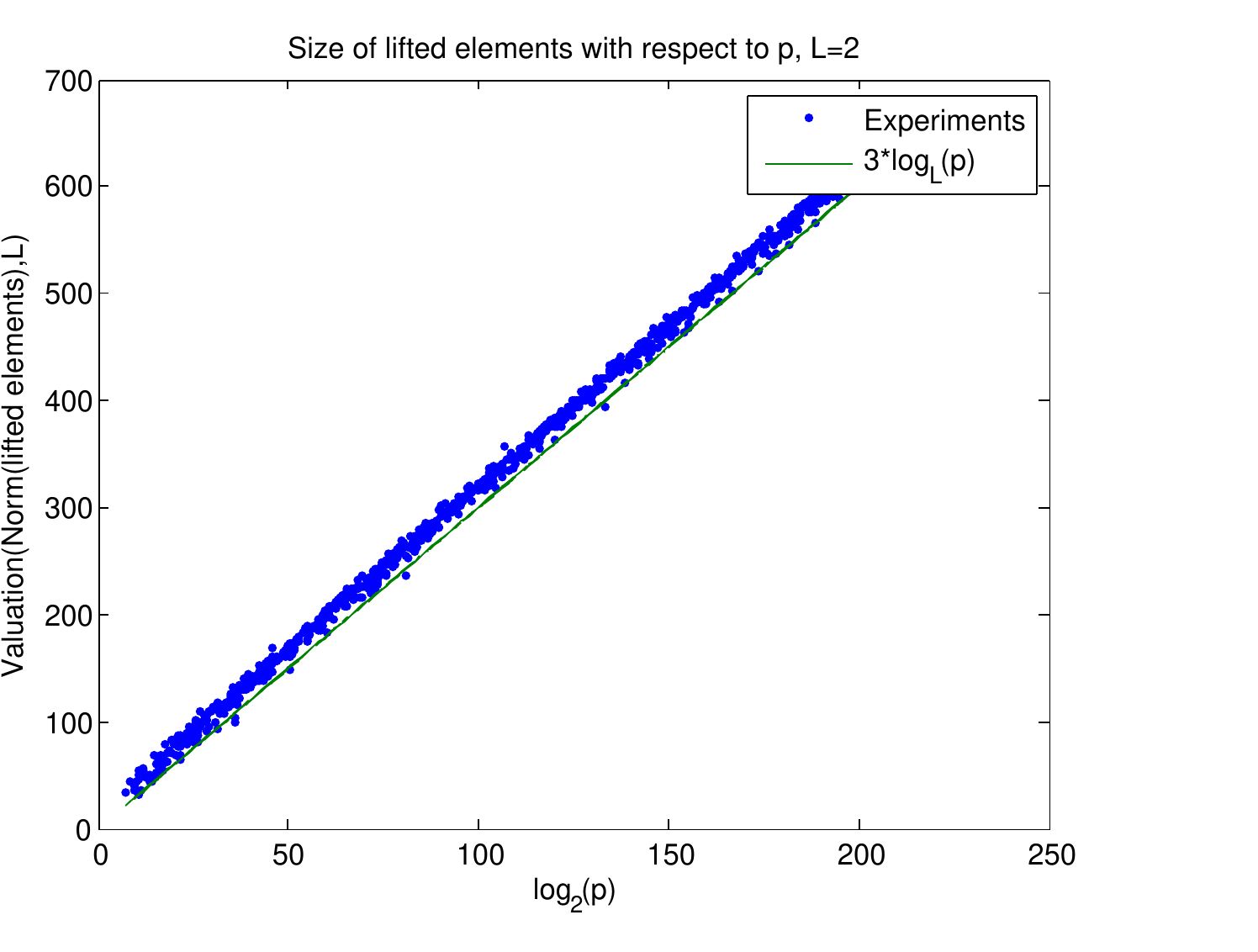}
\includegraphics[clip=true,viewport=3.8cm 9cm 19cm 20.5cm,scale=0.4]{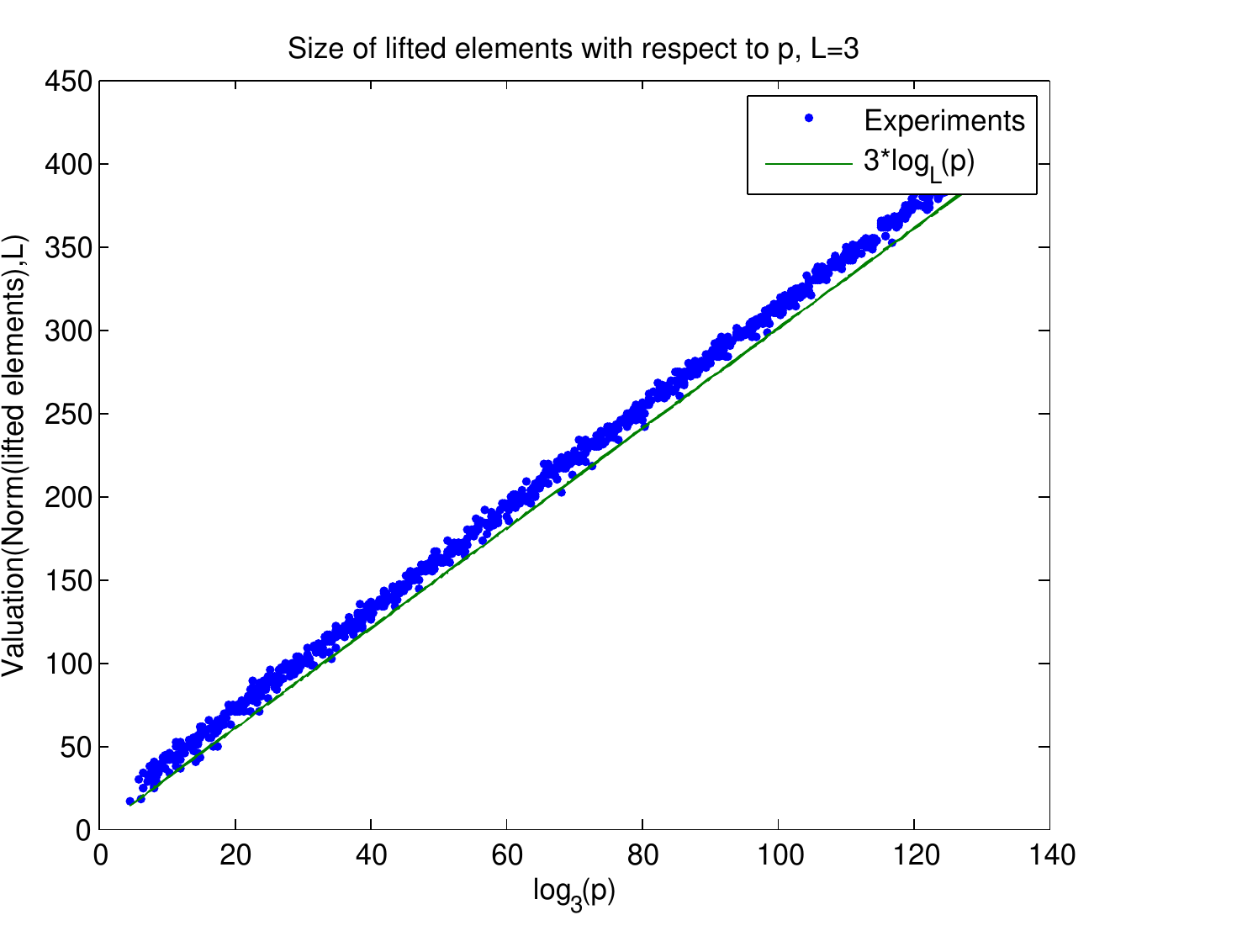}
\end{center}
\caption{Size of $\ell$-power norm quaternions obtained with the algorithm of Section~\ref{sec:ideals:ellpow:lift} for various $p$ values with $\ell=2$ (left) and $\ell=3$ (right). The green line corresponds to the approximated values $3\log_\ell p$.
\label{fig:liftSize}}
\end{figure}

\begin{figure}
\begin{center}
\includegraphics[clip=true,viewport=3.8cm 9cm 19cm 20.5cm,scale=0.4]{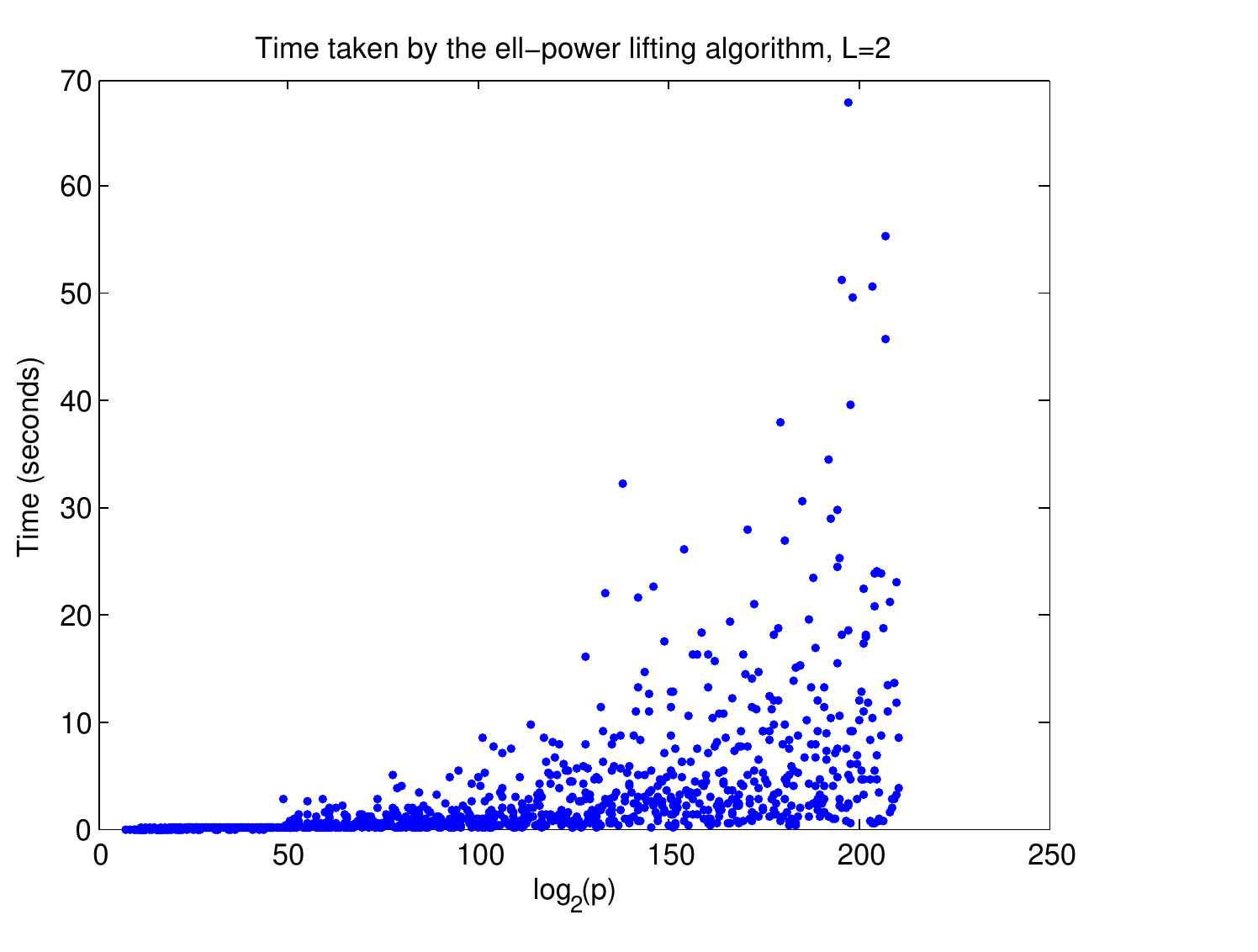}
\includegraphics[clip=true,viewport=3.8cm 9cm 19cm 20.5cm,scale=0.4]{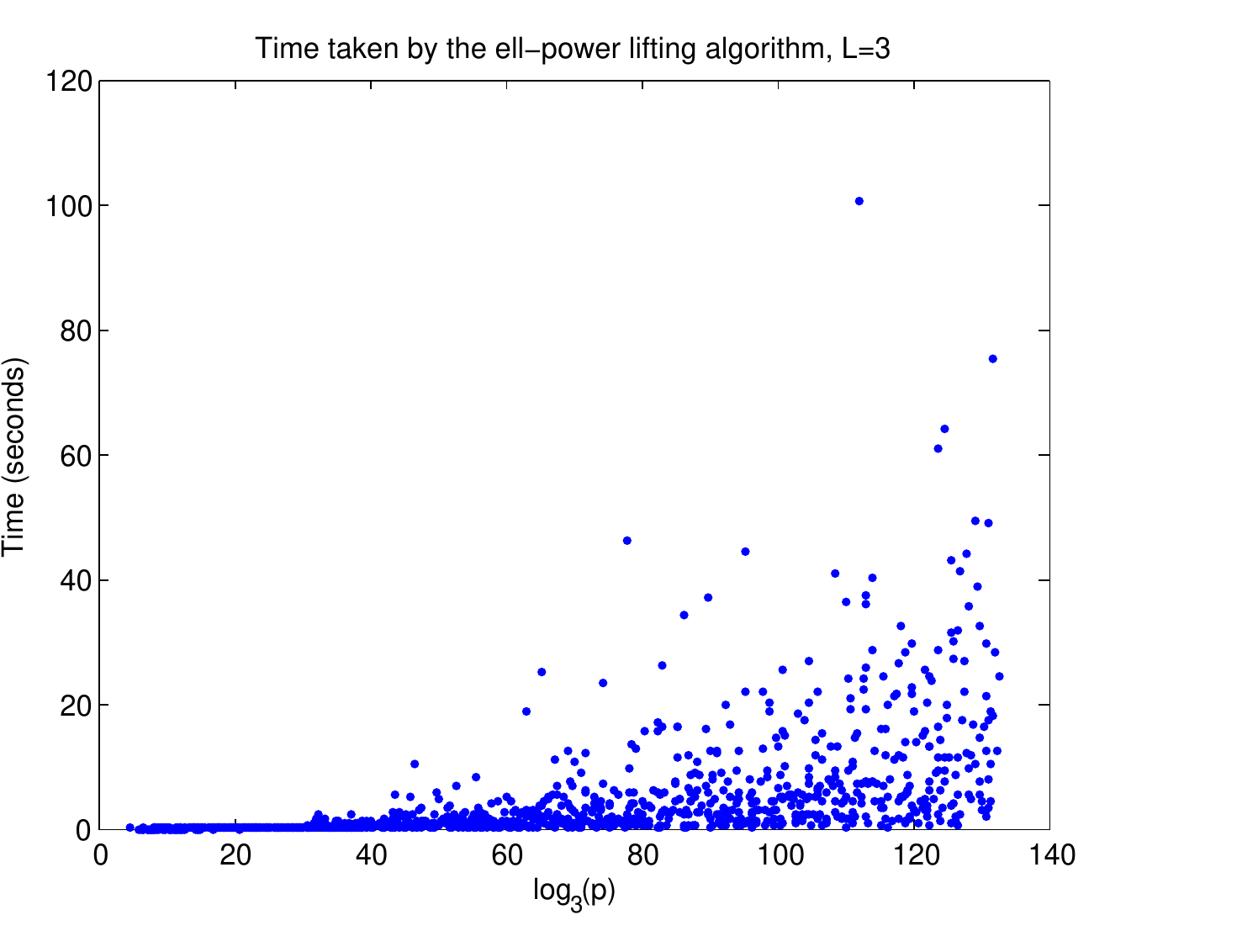}
\end{center}
\caption{Time taken by the algorithm of Section~\ref{sec:ideals:ellpow:lift} 
for various $p$ values, with~$\ell=2$~(left) and $\ell=3$~(right).
\label{fig:liftTime}}
\end{figure}

\begin{figure}
\begin{center}
\includegraphics[clip=true,viewport=3.8cm 9cm 19cm 20.5cm,scale=0.4]{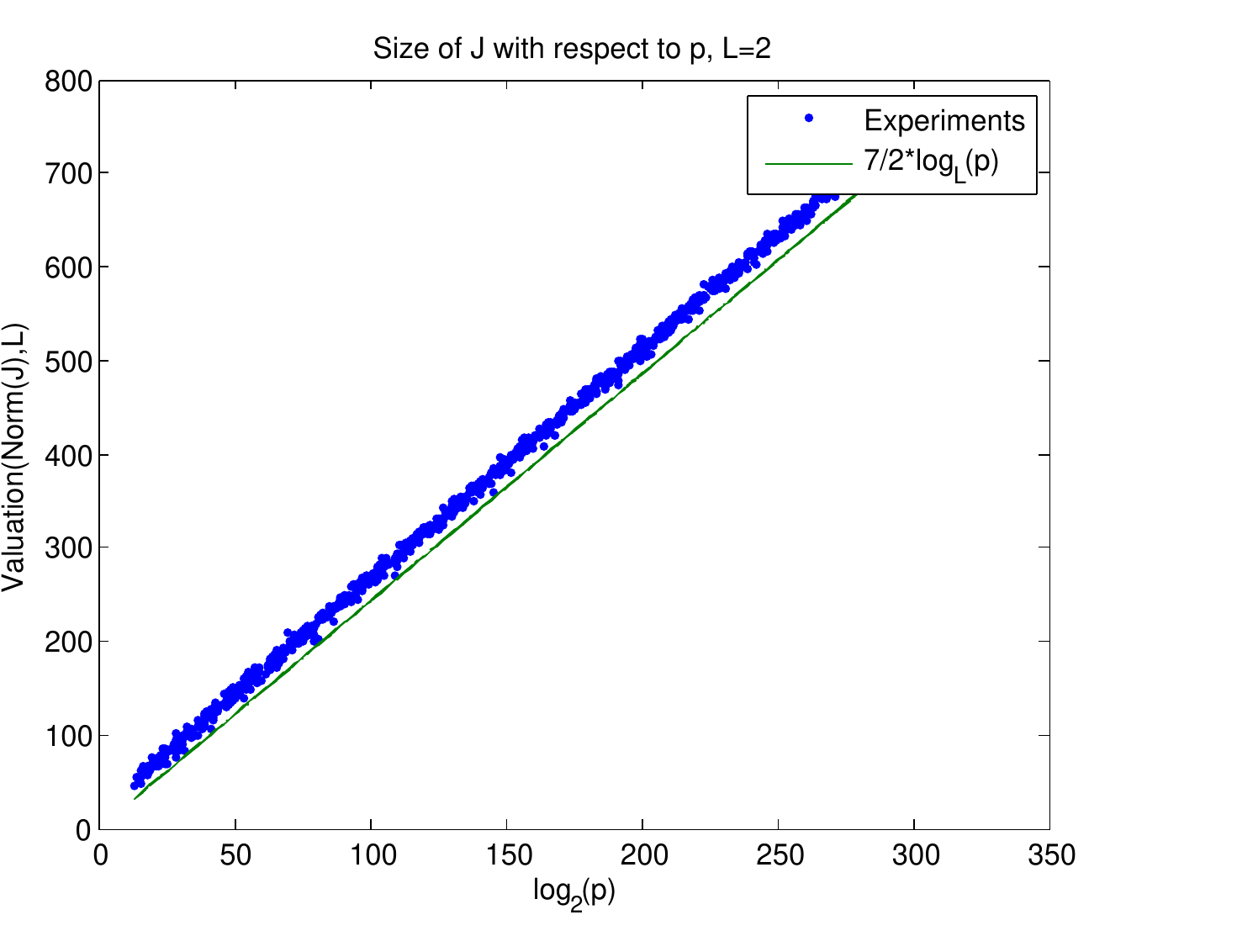}
\includegraphics[clip=true,viewport=3.8cm 9cm 19cm 20.5cm,scale=0.4]{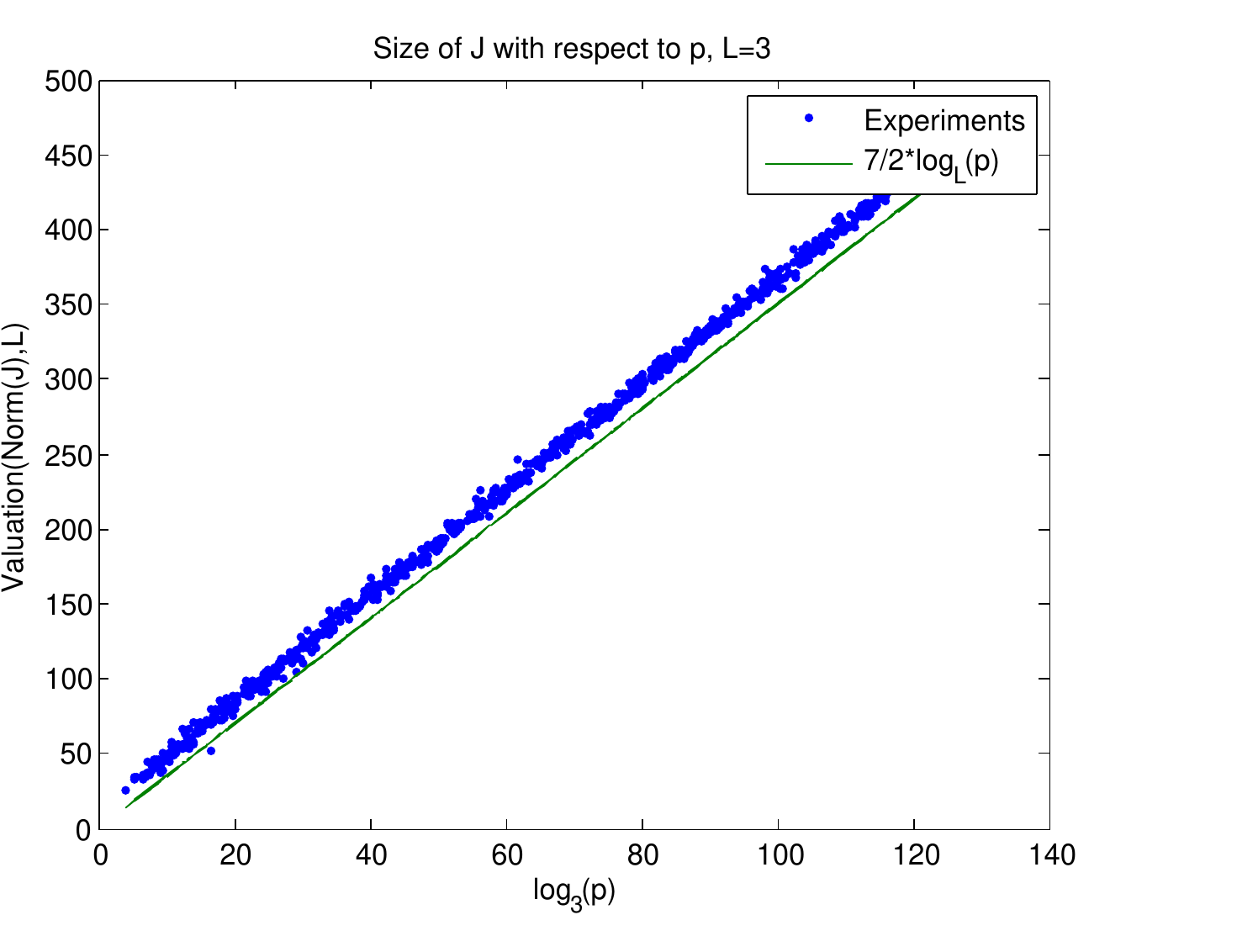}
\end{center}
\caption{Size of $\ell$-power norm ideals returned by the algorithm 
of Section~\ref{sec:ideals:ellpow} for various $p$ values with $\ell=2$ 
(left) and $\ell=3$ (right). The green line shows a priori approximative 
values $\frac{7}{2}\log_\ell p$.
\label{fig:ellpowSize}}
\end{figure}

\begin{figure}
\begin{center}
\includegraphics[clip=true,viewport=3.8cm 9cm 19cm 20.5cm,scale=0.4]{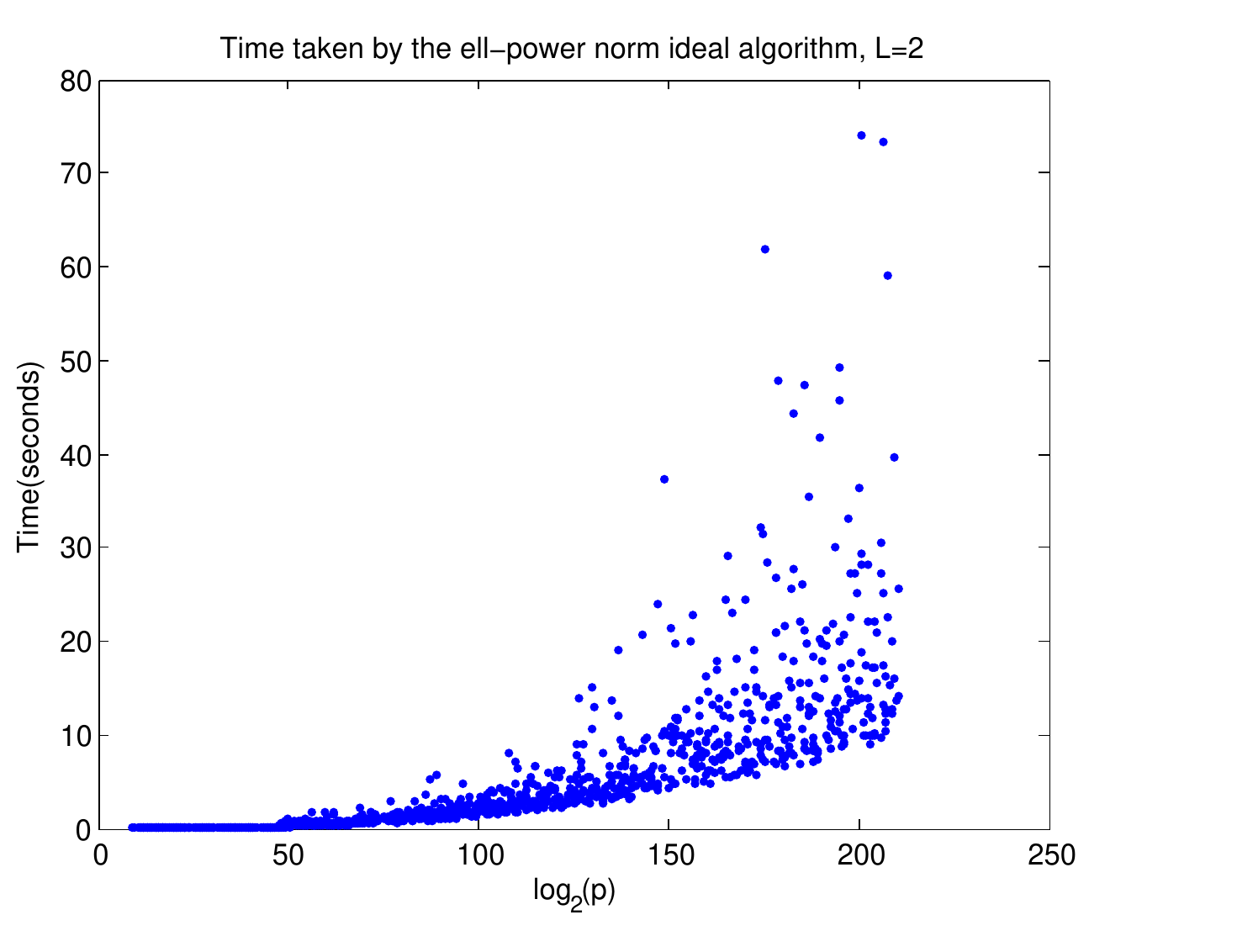}
\includegraphics[clip=true,viewport=3.8cm 9cm 19cm 20.5cm,scale=0.4]{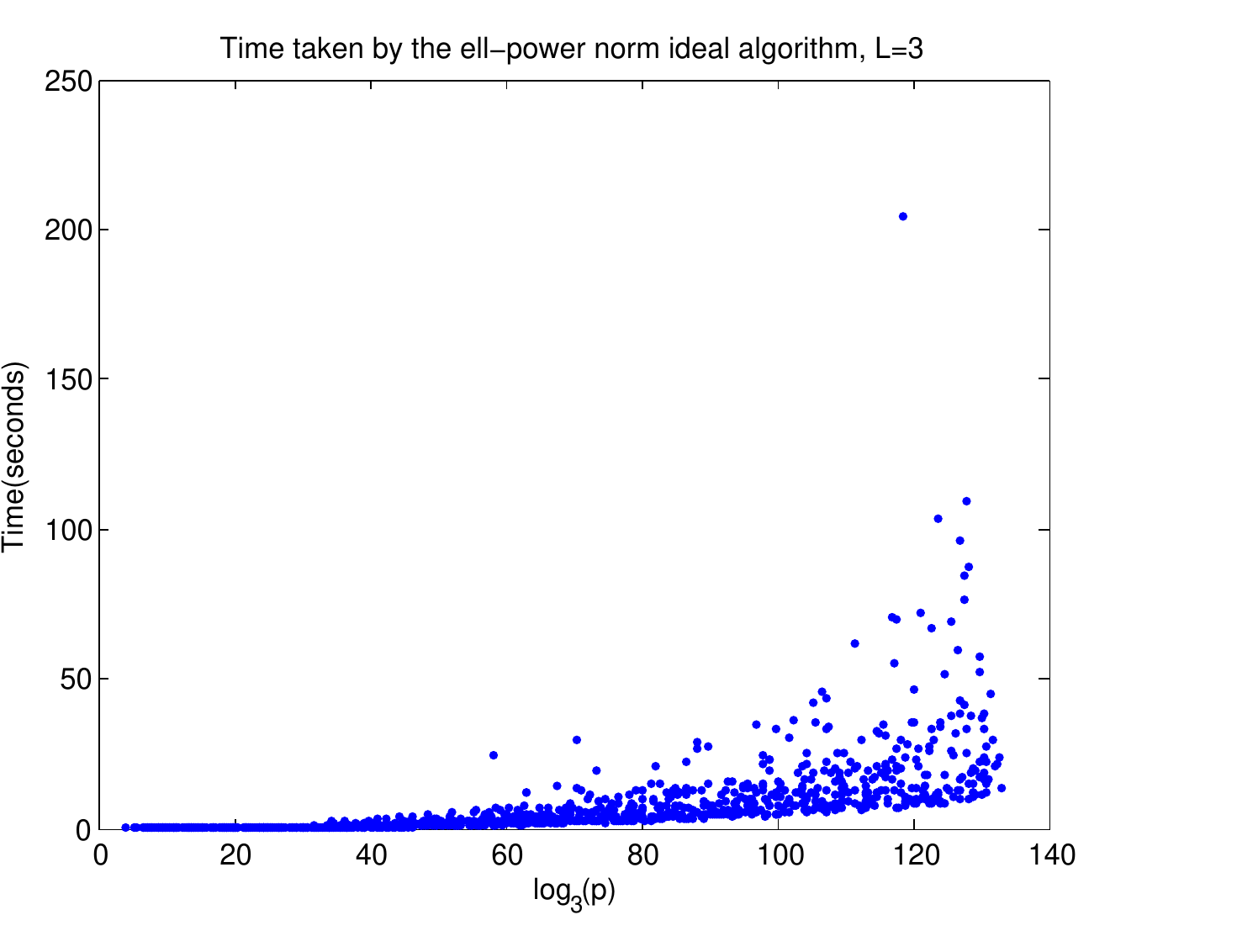}
\end{center}
\caption{Time taken by the algorithm of Section~\ref{sec:ideals:ellpow} 
for various $p$ values with $\ell=2$ (left) and $\ell=3$ (right).
\label{fig:ellpowTime}}
\end{figure}

\end{document}